\documentclass[oneside,reqno]{amsart}

\usepackage{amsmath}
\usepackage{amssymb}
\usepackage{amsfonts}   
\usepackage{booktabs}    
\usepackage{nicefrac}       
\usepackage{microtype}      
\usepackage{bm}
\usepackage{dsfont}
\usepackage{color,soul}
\usepackage[round]{natbib}
\usepackage{tabularx}
\usepackage{graphicx}
\usepackage{caption}
\usepackage{subcaption}
\usepackage[margin=3cm]{geometry}
\usepackage[onehalfspacing]{setspace}
\usepackage[hidelinks]{hyperref}  	    
\usepackage{multirow}

\newtheorem{theorem}{Theorem}[section]
\newtheorem{lemma}[theorem]{Lemma}
\newtheorem{corollary}[theorem]{Corollary}
\theoremstyle{definition}

\newtheorem{example}{Example}[section]

\theoremstyle{remark}
\newtheorem{remark}{Remark}[section]

\newcommand{\R}{\mathbb{R}}
\newcommand{\XX}{\mathcal{X}}

\newcommand{\E}{\mathbb{E}}

\DeclareMathOperator*{\argmax}{arg\,max}

\newcommand*\diff{\mathop{}\!\mathrm{d}}

\newcommand{\1}{\mathds{1}}

\DeclareMathOperator{\eff}{eff}

\newcommand{\B}{\boldsymbol{\beta}}
\newcommand{\f}{{\bf f}}
\newcommand{\M}{{\bf M}}

\renewcommand{\E}{\operatorname{E}}
\newcommand{\Prob}{\operatorname{P}}
\newcommand{\trace}{\operatorname{trace}}

\numberwithin{equation}{section}

\begin{document}
	
	\title{Poisson Regression in one Covariate on Massive Data}
	
	
	\author{Torsten Reuter}
	\thanks{Corresponding author: Torsten Reuter. \textit{E-mail address}: \texttt{torsten.reuter@ovgu.de}}
	\address{Torsten Reuter. Otto von Guericke University Magdeburg. Universitätsplatz 2, 39106 Magdeburg,
		Germany}
	\curraddr{}
	\email{torsten.reuter@ovgu.de}
	
	\author{Rainer Schwabe}
	\address{Rainer Schwabe. Otto von Guericke University Magdeburg. Universitätsplatz 2, 39106 Magdeburg,
		Germany}
	\curraddr{}
	\email{rainer.schwabe@ovgu.de}
	
	\subjclass[2020]{Primary: 62K05. Secondary: 62R07, 62J12, 62D99}
	\keywords{Subdata, $ D $-optimality, Massive Data, Poisson Regression.}
	\date{}
	
	
\begin{abstract}
	The goal of subsampling is to select an informative subset of all observations, when using the full data for statistical analysis is not viable.
	We construct locally $ D $-optimal subsampling designs under a Poisson regression model with a log link in one covariate.
	A Representation of the support of locally $ D $-optimal subsampling designs is established. 
	We make statements on scale-location transformations of the covariate that require a simultaneous transformation of the regression parameter. 
	The performance of the methods is demonstrated by illustrating examples.
	To show the advantage of the optimal subsampling designs, we examine the efficiency of uniform random subsampling as well as of two heuristic designs. 
	Further, the efficiency of locally $ D $-optimal subsampling designs is studied when the parameter is misspecified.
\end{abstract}
	
\maketitle
\section{Introduction}\label{sec:intro}
Progress in technology has lead to the collection of increasingly large data sets. 
The field of subsampling or subdata selection has gained popularity in recent years, where the aim is to decrease the number of observations in the data set while maintaining as much information as possible. 
To illuminate fundamental features of the concept, we solely focus on the reduction of observations in massive data for a single covariate, rather than reduction in covariates of high-dimensional data.
Subdata selection for massive data can be done via a probabilistic subsampling scheme or through deterministic rules.    
Earlier works on subsampling for generalized linear models (GLMs) focus on probabilistic methods, in particular on subsampling for logistic regression, see e.g. \cite{wang2018optimal}.
More recently there are more works on GLMs, including Poisson regression: 
For probabilistic subsampling under the $ A $ and $ L $-optimality criteria see \cite{ai2021optimal} and \cite{yu2022optimal}.
After \cite{wang2019information} introduced information-based optimal subdata selection (IBOSS) for linear regression, \cite{cheng2020information} proposed IBOSS for logistic regression, a deterministic subsampling technique with a probabilistic initial subsample to estimate the unknown parameter. 
This is necessary because, as is well known, the optimal design depends on the unknown parameter for GLMs. 

In the present paper on Poisson regression we derive locally $ D $-optimal continuous subsampling designs directly bounded by the density of the covariate. 
Such directly bounded designs were first studied by \cite{wynn1977optimum} and \cite{fedorov1989optimal}.
Recently, \cite{ul2019optimal} derived such bounded optimal subsampling designs for logistic regression in the context of optimal item calibration similarly to our approach. 
Such subsampling designs can then easily be used for subdata selection by including all observations that lie in the support of the optimal subsampling design and exclude all others. 
Though an initial step to estimate the parameter is necessary when it is unknown. 
When there are no constraints on the design, literature on Poisson regression includes \cite{rodriguez2007locally} and \cite{russell2009d}.

In Section~\ref{sec:model} we introduce the Poisson regression model to be used in this paper.
Then, we present a theorem on the support of a locally $ D $-optimal continuous subsampling design as well as a theorem concerning scale-location shifts of the covariate in Section~\ref{sec:design}. 
Further, we give examples when the covariate has an exponential or a uniform distribution. 
In Section~\ref{sec:efficiency} we study the efficiency of uniform random subsampling and some heuristic designs in comparison to the optimal subsampling designs. 
In addition, we consider the loss in efficiency when the regression parameter is misspecified. 
We add closing remarks in Section~\ref{sec:discussion}.
Proofs are deferred to an appendix.

\section{Model Specification}
\label{sec:model}
We consider pairs $ (x_{i}, y_{i}), i = 1,\dots,n $, of data, 
where $ y_{i} $ is the value of the response variable $ Y_{i} $. 
$x_i$ is a realization of the random variable $ X_{i} $. 
The covariate $ X_{i} $ has probability density function $ f_{X} $.
We suppose that the dependence of the response variable on the covariate $ X_{i} $ is given by a Poisson regression model.
\begin{itemize}
	\item[(A1)]
	Conditionally on the covariate $ X_i $, the response $ Y_i $ is Poisson distributed with conditional mean
	$\E(Y_{i}|X_{i}) = \exp(\beta_{0} + \beta_{1}X_{i})$.
\end{itemize}
Model (A1) constitutes a generalized linear model with random covariate and log link. 
The aim is to estimate the regression parameter 
$ \B = (\beta_0 ,\beta_1)^{\top} $.
$ \f(x) = (1,x)^{\top} $ denotes the regression function 
in the linear component $\f(X_{i})^{\top}\B$
such that 
$ \E(Y_{i}|X_{i}) = \exp(\f(X_{i})^{\top}\B)$.

We will further assume that the covariate $X_{i}$ has a continuous distribution satisfying some moment conditions. 
\begin{itemize}
	\item[(A2)]
	The covariate $X_{i}$ has density $f_{X}$ and $ \E(X_i^2 \exp(\beta_{1} X_{i})) < \infty $.
\end{itemize}

\section{Subsampling Design}
\label{sec:design}
We assume that the number of observations $ n $ is very large.  
However, we encounter the challenge of dealing with responses, denoted by $Y_{i}$, which are either costly or difficult to observe. 
Meanwhile, the values $x_i$ of all units $X_i$ of the covariate are readily available. 
To tackle this problem, we consider a scenario in which the responses $Y_{i}$ will only be observed for a specific subsampling proportion $\alpha$ of the units, $0 < \alpha < 1$. 
The selection of these units is based on the knowledge of the covariate values $x_{i}$ for all units. 
Our objective is to identify a subsample of pairs $(x_{i}, y_{i})$ that provides the most accurate estimation of the parameter vector $\B$ by means of the maximum likelihood estimator $ \hat{\B} $.
As the covariate $X_i$ has a continuous distribution, we are going to identify a subsample from this distribution that maximizes information, but only covers a percentage $\alpha$ of the distribution.
Therefore, we consider continuous designs $ \xi $ 
as measures of mass $\alpha$ on $\R$ with density $f_{\xi}$ bounded by the density $f_X$ 
of $X_i$ ensuring $ \int f_{\xi}(x)\diff x = \alpha $ and
$  f_{\xi}(x) \leq f_X(x)$ for all $x \in \R$. 
A subsample can then be generated according to such a bounded continuous design $ \xi $
by accepting units $i$ with probability $f_\xi(x_i) / f_X(x_i)$.
To obtain analytical results, we assume that the distribution of the covariate $X_{i}$ and, hence, its density $f_{X}$ is known. 

The information arising for a single observation at covariate value $ x $ is defined by the elemental information 
$ \M(x, \B) = \exp(\beta_0 + \beta_1 x) \f(x) \f(x)^\top $ \cite[see][]{russell2009d}.
For a continuous design $\xi$, the information matrix $ \M(\xi,\B)$ is defined by  
\begin{equation*}
	\M(\xi, \B) 
	= \int \M(x, \B) \xi(\diff x)
	= \exp(\beta_0) 
	\begin{pmatrix}
		m_{0}(\xi, \beta_{1})     & m_{1}(\xi, \beta_{1})      \\
		m_{1}(\xi, \beta_{1})          & m_{2}(\xi, \beta_{1})     
	\end{pmatrix},
\end{equation*}
where $ m_{k}(\xi, \beta_{1}) = \int x^{k} \exp(\beta_{1} x) f_{\xi}(x) \diff x$.
The moment condition $ \E(X_i^2 \exp(\beta_{1} X_{i})) < \infty $ stated in assumption~(A2) for the distribution of the covariates $X_i$ ensures that the entries $m_{k}(\xi,\beta_{1})$ in the information matrix are finite for any bounded continuous design $ \xi $.
Otherwise no meaningful optimization would be possible.
The moment condition is obviously satisfied when the distribution of $ X_i $ has a finite support. It also holds for other not heavy-tailed distributions like the normal distribution. 
In the case of an exponentially distributed covariate $ X_i $ considered below, the additional condition $ \beta_1 < \lambda $ on the slope parameter $ \beta_1 $ is required where $ \lambda $ is the rate parameter of the exponential distribution.

The information matrix $\M(\xi,\B)$ serves as a measure for evaluating the performance of the design $\xi$.
Note that $\M(\xi, \B)$ has full rank for any continuous design $ \xi $.
This ensures the existence of the inverse 
\begin{align*}
	\label{eq:info-inverse-linear}
	\M(\xi,\B)^{-1} = \frac{1}{\exp(\beta_{0}) d(\xi, \beta_{1})}
	\begin{pmatrix}
		m_{2}(\xi, \beta_{1})     & - m_{1}(\xi, \beta_{1})      \\
		- m_{1}(\xi, \beta_{1})          & m_{0}(\xi,\beta_{1})     
	\end{pmatrix}.
\end{align*}
where $d(\xi, \beta_{1}) = m_{0}(\xi, \beta_{1}) m_{2}(\xi, \beta_{1}) - m_{1}(\xi, \beta_{1})^2$
is the standardized determinant of $\M(\xi,\B)$, $d(\xi, \beta_{1}) = \exp(- 2 \beta_{0}) \det(\M(\xi, \B))$.
Then, $\sqrt{\alpha n}(\hat\B - \B)$ is asymptotically normal with mean zero and covariance matrix $\M(\xi,\B)^{-1}$ for the maximum likelihood estimator $ \hat{\B} $.

Maximization of the information matrix in the Loewner sense of nonnegative definiteness will not be possible, in general.
Therefore, we have to consider some one-dimensional information functional.
We will focus here on the most popular design criterion,
the $D$-criterion, in its widely used form, $\log(\det(\M(\xi,\B)))$, to be maximized.
A subsampling design $\xi^{*}$ with density $ f_{\xi^{*}} $ that maximizes the $D$-criterion for a given parameter value $ \B $
will be called locally $D$-optimal at $ \B $.
Maximization of the $D$-criterion can be interpreted in terms of the
covariance matrix as minimization of the volume
of the asymptotic confidence ellipsoid for the parameter vector $\B$.
 
\begin{remark}
	\label{remark:beta0}
	Note that $ \beta_{0} $ comes in into the information matrix only by the multiplicative factor $\exp(\beta_{0})$. 
	Thus, a locally $D$-optimal subsampling design $\xi^*$ only depends on the slope $ \beta_{1} $.
\end{remark}

For the characterization of a locally $D$-optimal design,
we will make use of an equivalence theorem based on constrained 
convex optimization \citep[see e.\,g.][]{sahm2001note}.
For this, we have to distinguish between cases related to the sign of the slope $\beta_{1}$.
In applications, the slope will often be negative ($\beta_{1} < 0$).
We will focus on that case and establish a representation of the locally $ D $-optimal subsampling designs for $\beta_{1} < 0$ first. 

Denote by $F_X$ and $q_{\alpha}$ the cumulative distribution function and the $\alpha$-quantile of $X_i$.
Let $ \1_{A} $ the indicator function of a set $A$, i.\,e.~$ \1_{A}(x) = 1 $, if $ x \in A $ and $ \1_{A}(x) = 0 $ otherwise.
Further, denote by 
\[
	\psi(x, \xi, \beta_{1}) = \frac{1}{d(\xi, \beta_{1})} \exp(\beta_{1} x)
	(m_{0}(\xi, \beta_{1}) x^2 - 2 m_{1}(\xi, \beta_{1}) x + m_{2}(\xi,\beta_{1}))
\]
the sensitivity function of a design $\xi$ 
(see Theorem~\ref{theorem:opt-design}).
Note that the sensitivity function $\psi(x, \xi, \beta_{1})$
does not depend on $\beta_{0}$.

\begin{theorem}
	\label{theo:support}
	Let assumptions~\emph{(A1)} and \emph{(A2)} 
	be satisfied 
	and let $\beta_{1} < 0$. 
	Then the subsampling design $\xi^*$ 
	is locally $D$-optimal at $ \B $ 
	if and only if
	$\xi^*$ has density $f_{\xi^*}(x) = f_X(x) \1_{\XX^*}(x)$
	and either
	\begin{itemize}
	\item[(i)] 
	there exist $ a_1 < a_2 < a_3 $ such that
	\\ 
	$\XX^{*} = (- \infty, a_1] \cup [a_2, a_3]$, 
	\\
	$ F_X(a_1) + F_X(a_3) - F_X(a_2) = \alpha $, and
	\hfill
	\emph{(\ref{theo:support}a)}
	\\
	$\psi(a_1, \xi^{*}, \beta_{1}) = \psi(a_2, \xi^{*}, \beta_{1}) = \psi(a_3, \xi^{*}, \beta_{1})$,
	\hfill
	\emph{(\ref{theo:support}b)}
	\item[or]
	\item[(ii)]
	$ \XX^{*} = (- \infty, q_{\alpha}]$, 
	\hfill
	\emph{(\ref{theo:support}a')}
	\\
	$\psi(x, \xi^{*}, \beta_{1}) > \psi(q_{\alpha}, \xi^{*}, \beta_{1})$
	for $x < q_\alpha$, and
	$\psi(x,\xi^{*},\beta_{1}) < \psi(q_{\alpha}, \xi^{*}, \beta_{1})$
	for $x > q_\alpha$.
	\hfill
	\emph{(\ref{theo:support}b')}
	\end{itemize}
\end{theorem}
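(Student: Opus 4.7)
My plan is to apply the equivalence theorem for $D$-optimal designs under a direct density bound $f_\xi \leq f_X$ with total mass $\alpha$, in the form given in \citet{sahm2001note}. Convexity of $-\log\det \M(\xi, \B)$ in $\xi$, combined with the linear constraints $0 \leq f_\xi \leq f_X$ and $\int f_\xi = \alpha$, yields Karush--Kuhn--Tucker conditions characterizing local $D$-optimality of $\xi^*$ by the existence of a threshold $c^* \geq 0$ (the Lagrange multiplier of the mass constraint) such that $f_{\xi^*}(x) = f_X(x)$ almost everywhere on $\{\psi(\cdot, \xi^*, \beta_1) > c^*\}$ and $f_{\xi^*}(x) = 0$ almost everywhere on $\{\psi(\cdot, \xi^*, \beta_1) < c^*\}$. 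The stated form of the sensitivity function follows from $\psi(x, \xi, \beta_1) = \trace(\M(x, \B) \M(\xi, \B)^{-1})$ and the explicit $2 \times 2$ inverse displayed above; the factor $\exp(\beta_0)$ cancels, consistent with Remark~\ref{remark:beta0}.

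The central step is a shape analysis of $\psi(\cdot, \xi^*, \beta_1)$. First I would observe that the quadratic factor $q(x) = m_0 x^2 - 2 m_1 x + m_2$ is strictly positive on $\R$, since its discriminant equals $-4 d(\xi^*, \beta_1) < 0$ ($\M(\xi^*, \B)$ being positive definite). Logarithmic differentiation then shows that the critical points of $\psi$ are the roots of a quadratic in $x$ with leading coefficient $\beta_1 m_0 < 0$; hence $\psi$ has at most two critical points. Combined with the boundary behaviour $\psi(x) \to +\infty$ as $x \to -\infty$ and $\psi(x) \to 0^+$ as $x \to +\infty$ (since $\beta_1 < 0$ dominates the polynomial $q$), the function $\psi(\cdot, \xi^*, \beta_1)$ is forced to be either strictly decreasing on $\R$ or to decrease, attain a strict local minimum at some $x_1$, rise to a strict local maximum at some $x_2 > x_1$, and then decrease again to $0$.

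The two alternatives of the theorem then correspond to the two possible shapes of the superlevel set $\{\psi(\cdot, \xi^*, \beta_1) \geq c^*\}$. If $\psi$ is strictly decreasing, or $\psi$ has the bump shape but $c^* \notin [\psi(x_1), \psi(x_2)]$, this set is a single left-infinite interval $(-\infty, a]$; the mass constraint $\int_{\XX^*} f_X = \alpha$ forces $a = q_\alpha$, and the strict monotonicity of $\psi$ across $q_\alpha$ yields the strict inequalities in (\ref{theo:support}b'), which is case (ii). If instead $\psi(x_1) \leq c^* \leq \psi(x_2)$, the superlevel set has the form $(-\infty, a_1] \cup [a_2, a_3]$ with $a_1 \leq x_1 \leq a_2 \leq x_2 \leq a_3$ and $\psi$ attains the common value $c^*$ at each $a_i$, yielding (\ref{theo:support}b); the mass constraint rewrites as $F_X(a_1) + F_X(a_3) - F_X(a_2) = \alpha$, giving (\ref{theo:support}a). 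The converse direction in each case is a direct verification of the KKT conditions with $c^*$ defined as the common sensitivity value at the boundary points of $\XX^*$.

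The main obstacle I anticipate is the shape analysis of $\psi$, in particular confirming that when two critical points exist the left one is necessarily a local minimum and the right one a local maximum; this follows from the sign of the derivative between the roots combined with the boundary behaviour at $\pm\infty$. A secondary subtlety is the treatment of the degenerate thresholds $c^* \in \{\psi(x_1), \psi(x_2)\}$ at the interface of (i) and (ii); real analyticity of $\psi$ guarantees that $\{\psi = c^*\}$ is a Lebesgue null set, so the KKT conditions still pin $f_{\xi^*}$ down $f_X$-almost everywhere and the strictness in (\ref{theo:support}b') follows from the pointwise shape.
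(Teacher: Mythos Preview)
Your proposal is correct and follows the same overall strategy as the paper: invoke the bounded-design equivalence theorem of \cite{sahm2001note} to reduce optimality to a superlevel-set condition on the sensitivity function, then analyze the shape of $\psi(\cdot,\xi^{*},\beta_{1})$ to conclude that $\XX^{*}$ is either a single left half-line or a half-line together with one finite interval.

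The only noteworthy difference is in how the shape analysis is carried out. The paper proves a separate lemma showing that $\psi(x,\xi,\beta_{1})=s$ has at most three solutions, by setting $v(x)=q(x)-s\exp(-\beta_{1}x)$, observing that $v^{(3)}$ never vanishes, and applying Rolle's theorem three times. You instead bound the number of critical points of $\psi$ directly: logarithmic differentiation gives $\psi'(x)=0 \iff \beta_{1}q(x)+q'(x)=0$, a quadratic with negative leading coefficient, hence at most two critical points, and the sign pattern of this downward parabola forces the min--max ordering you state. Your route is marginally more informative (it pins down which critical point is the minimum and which the maximum without further argument), while the paper's route is slightly cleaner to state as a standalone lemma; both yield the same ``one or three crossings'' conclusion once combined with the boundary behaviour $\psi\to\infty$ at $-\infty$ and $\psi\to 0^{+}$ at $+\infty$. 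The converse direction and the treatment of degenerate thresholds are handled at the same level of detail in both arguments.
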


Conditions~(\ref{theo:support}a) and (\ref{theo:support}a')
correspond to  the subsampling percentage $\alpha$
while (\ref{theo:support}b) and (\ref{theo:support}b')
are related to the conditions on the sensitivity function
in the general equivalence theorem
for bounded designs  
(Theorem~\ref{theorem:opt-design})
reproduced in the Appendix.

In view of the shape $f_{\xi^*}(x) = f_X(x) \1_{\XX^*}(x)$ of the density of the continuous optimal subsampling designs $\xi^*$ in Theorem~\ref{theo:support}, the subsampling mechanism becomes deterministic for the optimal design:
The subsample can be generated 
by accepting all units $i$ for which $x_i \in \XX^*$ and by rejecting all others.

According to Theorem~\ref{theo:support}, there are two different scenarios for the locally $D$-optimal design $\xi^*$. Either the supporting set $\XX^*$ consists of two separate intervals $(- \infty, a_1]$ and $[a_2, a_3]$
(scenario~(i)) or these intervals will be merged into a single one (scenario~(ii)).

\begin{remark}
	\label{rem:unique}
	The optimal subsampling design $\xi^*$ is unique
	because of the strict concavity of the $D$-criterion
	and the shape of the sensitivity function.
\end{remark}

For the construction of a locally $ D $-optimal subsampling design by Theorem~\ref{theo:support}, 
first the conditions of scenario~(ii) for an optimal design supported on a single interval can be checked.
If scenario~(ii) does not apply,
the boundary points $ a_1 < a_2 < a_3 $ for the support $\XX^*$  have to be calculated by solving the system of (nonlinear) equations (\ref{theo:support}a) and (\ref{theo:support}b). 
In the latter case, the rightmost boundary point $ a_3 $ of $\XX^*$ may lie outside the support of $ X_{i} $, i.\,e.~$ a_3 > x_{\max}$, when the support of the covariate $X_{i}$ is bounded from above, i.\,e.~$x_{\max} = \mathrm{ess}\sup(X_i) < \infty$, where $\mathrm{ess}\sup$ denotes the essential supremum (see, e.\,g., Example~\ref{example:Unif} for the uniform distribution below). 
Then, in scenario~(ii), explicit calculation of the rightmost boundary point $ c $ is not necessary. Instead, it is sufficient for (\ref{theo:support}b) to verify that 
$ \psi(x_{\max}, \xi^{*}, \beta_{1}) \ge \psi(a_1, \xi^{*}, \beta_{1}) = \psi(a_2, \xi^{*}, \beta_{1}) $.
\begin{remark}
	\label{rem:left-interval}
	The leftmost boundary point $ a_1 $ of a $D$-optimal subsampling design $\xi^*$ cannot lie outside the range of $ X_{i} $, i.\,e.~$ a_1 > x_{\min} $, where $ x_{\min} = \mathrm{ess}\inf(X_i) $ the essential infimum of the distribution of $X_{i}$.
\end{remark}
\begin{remark}
	When $ \beta_{1} = 0 $, the information matrix $ \M(\xi,\B) $ is, up to the multiplicative constant $ \exp(\beta_{0}) $, equal to the information matrix $ \M(\xi) = \int \f(x) \f(x)^\top \xi(\diff x) $ in the linear model \cite[treated in][]{reuter2023optimal}. 
	Therefore, the $ D $-optimal subsampling design for ordinary linear regression is also  locally $ D $-optimal in the Poisson regression model. 
	Hence, according to~\cite[][Section~4]{reuter2023optimal}, 
	the subsampling design $\xi^*$ 
	is locally $D$-optimal for $ \beta_{1} = 0 $ 
	if and only if
	there exist $ a_1 < a_2 $ such that
	\\ 
	\hspace*{5mm}
	$f_{\xi^*}(x) = f_X(x) \1_{(- \infty, a_1] \cup [a_2, \infty)}(x)$,
	\\
	\hspace*{5mm}
	$ F_X(a_2) - F_X(a_1) = 1 - \alpha $, and
	\\
	\hspace*{5mm}
	$\psi(a_1, \xi^{*}, \beta_{1}) = \psi(a_2, \xi^{*}, \beta_{1})$.	
\end{remark}

By means of equivariance considerations, we may transfer a locally $ D $-optimal subsampling design $\xi^{*}$ for a covariate $ X_{i} $ to a location-scale transformed covariate $ Z_i =  a X_i + b $, $ a \neq 0 $.
However, the transformation of a locally $ D $-optimal subsampling design is not as straightforward as in polynomial regression \cite[see][]{reuter2023optimal}, but requires a simultaneous transformation of the slope parameter $ \beta_{1} $. 
This kind of simultaneous transformation typically has to be used in generalizes linear models where the elemental information depends on $\beta_{1}$ by the linear component $\f(x^{\top}) \beta_{1}$, see e.\,g.~\cite{radloff2016invariance}.
\begin{theorem}
	\label{theorem:equivariant}
	Let $\xi^*$ be a locally $D$-optimal subsampling design at $ \beta_{1} $ for a covariate $X_i$ with density $f_{X}$.
	Then, for a covariate $Z_i$ with density $f_{Z}(z) = \frac{1}{|a|} f_{X}(\frac{z - b}{a})$, the design $ \zeta^{*} $ with density $f_{\zeta^*}(z) =  \frac{1}{|a|} f_{\xi^{*}}(\frac{z - b}{a})$ is locally $ D $-optimal at the transformed parameter $ \beta_{1} / a $.
\end{theorem}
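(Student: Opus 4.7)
The plan is to reduce the optimality of $\zeta^*$ at $\beta_{1}/a$ to the already-established optimality of $\xi^*$ at $\beta_{1}$ through a change of variables $z = ax + b$. The key observation is that this substitution sets up a mass- and bound-preserving bijection between continuous designs on $\R$ bounded by $f_{X}$ and those bounded by $f_{Z}$, and that, under this bijection, the $D$-criterion changes only by a multiplicative constant that does not depend on the design.

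First I would verify the bookkeeping for the bijection. Given any bounded design $\xi$ of mass $\alpha$ with density $f_{\xi} \leq f_{X}$, the assignment $f_{\zeta}(z) = \frac{1}{|a|} f_{\xi}\!\left(\frac{z-b}{a}\right)$ yields a density on the $Z$-scale. Substituting $x = (z-b)/a$ one checks $\int f_{\zeta}(z)\diff z = \alpha$ and $f_{\zeta}(z) \leq \frac{1}{|a|}f_{X}\!\left(\frac{z-b}{a}\right) = f_{Z}(z)$, and the map is clearly invertible, so it restricts to a bijection between the two classes of admissible subsampling designs.

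Next I would relate the information matrices. Writing $\tilde{\f}(z) = (1,z)^{\top}$ and noting $\tilde{\f}(ax+b) = Q \f(x)$ with
\[
Q = \begin{pmatrix} 1 & 0 \\ b & a \end{pmatrix},
\]
the identity $\tilde{\beta}_{1} z = (\beta_{1}/a)(ax+b) = \beta_{1} x + \beta_{1} b/a$ lets us factor out a scalar when changing variables. An explicit computation via $z = ax+b$, $\diff z = |a|\diff x$, gives
\[
\M(\zeta, \tilde{\B}) = \exp\!\bigl(\tilde{\beta}_{0} + \tilde{\beta}_{1} b - \beta_{0}\bigr)\, Q\, \M(\xi, \B)\, Q^{\top},
\]
where $\tilde{\beta}_{1} = \beta_{1}/a$ and $\tilde{\beta}_{0}$ is arbitrary (it enters only through its multiplicative factor, as noted in Remark~\ref{remark:beta0}). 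Taking determinants and using $\det(Q) = a$ yields
\[
\det\bigl(\M(\zeta, \tilde{\B})\bigr) = a^{2}\exp\!\bigl(2(\tilde{\beta}_{0} + \tilde{\beta}_{1} b - \beta_{0})\bigr)\, \det\bigl(\M(\xi, \B)\bigr),
\]
so the two $D$-criteria differ by an additive constant that depends only on $a$, $b$, $\beta_{0}$, $\tilde{\beta}_{0}$, $\beta_{1}$ but not on the design.

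Finally I would conclude by monotonicity. Since $\xi \mapsto \zeta$ is a bijection between the admissible classes and $\log\det(\M(\zeta,\tilde{\B}))$ equals $\log\det(\M(\xi,\B))$ up to a constant, the optimiser on one side corresponds to the optimiser on the other. Hence $\zeta^{*}$ with density $f_{\zeta^{*}}(z) = \frac{1}{|a|}f_{\xi^{*}}\!\left(\frac{z-b}{a}\right)$ is locally $D$-optimal at $\tilde{\beta}_{1} = \beta_{1}/a$. No real obstacle arises; the only delicate point is tracking the absolute value $|a|$ carefully so that the transformation works uniformly in the sign of $a$, and verifying that the multiplicative factor depends only on fixed quantities and not on the design.
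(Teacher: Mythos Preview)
Your proof is correct and follows essentially the same route as the paper: the same change of variables $z=ax+b$, the same bijection between admissible bounded designs, the same matrix $Q$ with $\f(ax+b)=Q\f(x)$, and the same determinant comparison. The only cosmetic difference is that the paper fixes $\tilde\beta_{0}=\beta_{0}-\beta_{1}b/a$ (equivalently $\tilde{\B}=(Q^{\top})^{-1}\B$) so that the scalar factor in front of $Q\,\M(\xi,\B)\,Q^{\top}$ is exactly $1$, whereas you leave $\tilde\beta_{0}$ arbitrary and invoke Remark~\ref{remark:beta0} to dismiss the design-independent factor.
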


For $a = - 1$, Theorem~\ref{theorem:equivariant} covers sign change 
Then we can transfer the characterization of a locally $D$-optimal subsampling design in the equivalence theorem (Theorem~\ref{theo:support}) to positive values for the slope $\beta_{1}$.

\begin{corollary}
	\label{cor:support-positiveslope}
	Let $\beta_{1} > 0$. 
	Then the subsampling design $\xi^*$ 
	is locally $D$-optimal at $ \B $ 
	if and only if
	$f_{\xi^*} = f_X \1_{\XX^*}$
	and either
	\begin{itemize}
		\item[(i)] 
		there exist $ a_1 < a_2 < a_3 $ such that
		\\ 
		$\XX^{*} = [a_1, a_2] \cup [a_3, \infty)$, 
		\\
		$ F_X(a_1) + F_X(a_3) - F_X(a_2) = 1 - \alpha $, and
		\\
		$\psi(a_1, \xi^{*}, \beta_{1}) = \psi(a_2, \xi^{*}, \beta_{1}) = \psi(a_3, \xi^{*}, \beta_{1})$,
		\item[or]
		\item[(ii)]
		$ \XX^{*} = [q_{1 - \alpha}, \infty)$, 
		\\
		$\psi(x, \xi^{*}, \beta_{1}) < \psi(q_{1 - \alpha}, \xi^{*}, \beta_{1})$
		for $x < q_\alpha$, and
		$\psi(x,\xi^{*},\beta_{1}) > \psi(q_{\alpha}, \xi^{*}, \beta_{1})$
		for $x > q_\alpha$.
	\end{itemize}
\end{corollary}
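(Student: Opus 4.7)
The plan is to reduce the corollary to Theorem~\ref{theo:support} by the sign change $a = -1$, $b = 0$ in Theorem~\ref{theorem:equivariant}. Let $\beta_1 > 0$, and set $\tilde X_i = - X_i$, so that $\tilde X_i$ has density $f_{\tilde X}(\tilde x) = f_X(-\tilde x)$. Theorem~\ref{theorem:equivariant} (together with Remark~\ref{rem:unique}, which makes the correspondence bijective) implies that a design $\xi^*$ with density $f_{\xi^*}$ is locally $D$-optimal at $\beta_1$ for $X_i$ if and only if the reflected design $\tilde \xi^*$ with density $f_{\tilde \xi^*}(\tilde x) = f_{\xi^*}(-\tilde x)$ is locally $D$-optimal at the transformed slope $-\beta_1 < 0$ for $\tilde X_i$. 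Since $-\beta_1 < 0$, Theorem~\ref{theo:support} gives a full characterization of $\tilde \xi^*$, and it remains only to translate its two scenarios back through $\tilde x \leftrightarrow -x$.

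For the support condition, I would use $F_{\tilde X}(\tilde x) = 1 - F_X(-\tilde x)$ and $\tilde q_\alpha = -q_{1-\alpha}$. Writing $(-\infty, \tilde a_1] \cup [\tilde a_2, \tilde a_3]$ as in scenario~(i) of Theorem~\ref{theo:support} for $\tilde \xi^*$ and setting $a_1 = -\tilde a_3$, $a_2 = -\tilde a_2$, $a_3 = -\tilde a_1$, the reflected support becomes $[a_1, a_2] \cup [a_3, \infty)$, and the mass condition $F_{\tilde X}(\tilde a_1) + F_{\tilde X}(\tilde a_3) - F_{\tilde X}(\tilde a_2) = \alpha$ rewrites as $F_X(a_1) + F_X(a_3) - F_X(a_2) = 1 - \alpha$, matching case~(i) of the corollary. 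Similarly, scenario~(ii) of Theorem~\ref{theo:support} gives the half-line $(-\infty, -q_{1-\alpha}]$ for $\tilde \xi^*$, which reflects to $[q_{1-\alpha}, \infty)$ for $\xi^*$, as in case~(ii).

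The main computational step is the identity $\psi(\tilde x, \tilde \xi^*, -\beta_1) = \psi(-\tilde x, \xi^*, \beta_1)$ for the sensitivity function. This is what I expect to be the key check: substituting $x = -\tilde x$ in the definition of $m_k$ yields $m_k(\tilde \xi^*, -\beta_1) = (-1)^k m_k(\xi^*, \beta_1)$, so that $d(\tilde \xi^*, -\beta_1) = d(\xi^*, \beta_1)$, and the quadratic factor $m_0 \tilde x^2 - 2 m_1 \tilde x + m_2$ at $\tilde x$ with the moments of $\tilde \xi^*$ equals the same quadratic at $-\tilde x$ with the moments of $\xi^*$ (the two sign flips in $m_1$ and in $\tilde x$ cancel), while $\exp(-\beta_1 \tilde x) = \exp(\beta_1 x)$. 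This symmetry is what transports the two sensitivity conditions of Theorem~\ref{theo:support} onto their mirror images in the corollary: equality of $\psi$ at $\tilde a_1, \tilde a_2, \tilde a_3$ becomes equality at $a_1, a_2, a_3$, and the one-sided inequalities about $\tilde q_\alpha$ on $(-\infty, \tilde q_\alpha)$ and $(\tilde q_\alpha, \infty)$ become the corresponding inequalities about $q_{1-\alpha}$ on $(q_{1-\alpha}, \infty)$ and $(-\infty, q_{1-\alpha})$, with the strict direction reversed as stated. Since the argument is fully reversible, both implications of the ``if and only if'' follow simultaneously.
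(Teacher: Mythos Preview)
Your proof is correct and follows exactly the route the paper indicates: apply Theorem~\ref{theorem:equivariant} with $a=-1$, $b=0$ to reduce the case $\beta_1>0$ to Theorem~\ref{theo:support} via the reflection $\tilde X_i=-X_i$, then translate the two scenarios back. The paper does not spell out the details beyond the sentence preceding the corollary, so your explicit verification of the moment identity $m_k(\tilde\xi^*,-\beta_1)=(-1)^k m_k(\xi^*,\beta_1)$, the resulting sensitivity symmetry $\psi(\tilde x,\tilde\xi^*,-\beta_1)=\psi(-\tilde x,\xi^*,\beta_1)$, and the transcription of the mass and quantile conditions are a welcome elaboration rather than a different approach.
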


To illustrate how the equivalence theorem (Theorem~\ref{theo:support}) can be used to construct locally $D$-optimal subsampling designs, we consider $\beta_{1} < 0$ in the situation of an exponentially and of a uniformly distributed covariate in the following two examples.

\begin{example}[exponential distribution]
	\label{example:Exp}
	We assume the covariate $X_i$ to follow an exponential distribution with rate $ \lambda $, i.\,e.~$ X_{i} $ has density $ f_{X}(x) = \lambda \exp(- \lambda x) $ for $ x \ge 0 $.
	The condition of finite moments $ m_{k}(\xi, \beta_{1}) $ is satisfied for $ \beta_{1} < \lambda $ and hence, in particular, for $\beta_{1} \leq 0$.
	For $ \beta_{1} < 0 $, let 
	\begin{equation*}
		g_{0}(t) =  
		\frac{\lambda}{\lambda - \beta_{1}} \exp(- (\lambda - \beta_{1}) t),\ 
		g_{1}(t) = 
		\left(t + \frac{1}{\lambda - \beta_{1}}\right) g_{0}(t)
		\mathrm{\ and\ }
		g_{2}(t) = 
		t^2 g_{0}(t) + \frac{2}{\lambda - \beta_{1}} g_{1}(t) 
	\end{equation*}	
	such that $g_{k}(t) = \int_{t}^{\infty} x^{k} \exp(\beta_{1} x) f_{X}(x) \diff x$, $t \geq 0$.
	Then, in scenario~(i), the entries in $ \M(\xi^{*}, \B) $ are 
	\[
	m_{k}(\xi^{*}, \beta_{1}) = g_{k}(0) - g_{k}(a_1) + g_{k}(a_2) - g_{k}(a_3) \, , 
	\quad
	k = 0, 1, 2,
	\]
	while they reduce to
	$ m_{k}(\xi^{*}, \beta_{1}) = g_{k}(0) - g_{k}(q_{\alpha})$ in scenario~(ii) when there is only one interval, where $q_{\alpha} = - \log(1 - \alpha) / \lambda$.
	
	In scenario~(i), we obtain numerical results for the boundary points $a_1$ to $a_3$ solving the system of equations~(\ref{theo:support}a) and (\ref{theo:support}b) using the Newton method implemented in the \textbf{\textsf{R}} package \textit{nleqslv} by \cite{nleqslv}. 
	Note that here $ a_3 < x_{\max} = \infty $.
	For the case of a standard exponential distribution ($\lambda = 1$), results  are given in Table~\ref{Table:Exp} for selected values of $ \beta_{1} $ and $ \alpha $.
	In addition, we give the values for the amount $ F_{X}(a_1) $ as well as the percentage of mass the design $ \xi^{*} $ places on the left interval $ [0, a_1] $.
	We also add the result for $ \beta_{1} = 0 $ for reference \citep[see][]{reuter2023optimal}.
	\begin{table}[h]
		\begin{center}
			\caption{Numerical values for the boundary points $a_1$, $ a_2 $, $a_3$, and $q_{\alpha}$, 
				respectively, for selected values of the subsampling proportion $\alpha$ and slope parameter $ \beta_{1} $
				in the case of a standard exponentially distributed covariate ($\lambda = 1$)}
			\begin{tabular}{ll|ccccc} \toprule
				{$\alpha$} & {$\beta_{1}$} & {$a_1$} & {$a_2$} & {$a_3$, $q_{\alpha}$} & {$F_{X}(a_1)$} 
				& {\% of mass on $[0, a_1]$} \\ \midrule
				\multirow{4}{*}{0.01} & \phantom{-}0.0 & 0.00579 & 5.46588 & -  & 0.00577 & 57.71 
									\\
									& -0.5 & 0.00501 & 3.86767 & \phantom{1}4.14130 & 0.00500 & 49.95 
									\\
									& -1.0 & 0.00500 & 1.98399 & \phantom{1}2.02112 & 0.00499 & 49.88 
									\\
									& -4.0 & 0.00496 & 0.49830 & \phantom{1}0.50665 & 0.00495 & 49.51 
									\\
									\midrule
					\multirow{4}{*}{\textbf{0.10}} & \phantom{-}0.0 & 0.06343 & 3.25596 & -  & 0.06146 		& 61.46 
									\\
									& \textbf{-0.5} & \textbf{0.05181} & \textbf{2.92225} & \phantom{1}\textbf{5.44835} & \textbf{0.05049} & \textbf{50.49} 
									\\
									& -1.0 & 0.05011 & 1.83717 & \phantom{1}2.22435 & 0.04887 & 48.87 
									\\
									& -4.0 & 0.04680 & 0.47740 & \phantom{1}0.56896 & 0.04572 & 45.72 
									\\
									\midrule
				\multirow{4}{*}{0.30} & \phantom{-}0.0 & 0.21398 & 2.23153 & -                  & 0.19264 & 64.21 
									\\
									& -0.5 & 0.17225 & 1.95006 & \phantom{1}7.60885 & 0.15823 & 52.74 
									\\
									& -1.0 & 0.15317 & 1.50902 & \phantom{1}2.76234 & 0.14202 & 47.34 
									\\
									& -4.0 & 0.12876 & 0.40855 & \phantom{1}0.72273 & 0.12081 & 40.27 
									\\
									\midrule
				\multirow{4}{*}{0.75} & \phantom{-}0.0 & 0.67278 & 1.34596 & -                  & 0.48971 & 65.29 
									\\
									& -0.5 & 0.52804 & 1.07947 & 10.89214           & 0.41024 & 54.70 
									\\
									& -1.0 & 0.43176 & 0.88401 & \phantom{1}4.28609 & 0.35063 & 46.75 
									\\
									& -4.0 & - & -       & \phantom{1}1.38629                 	 & - & -  
									\\   		
				\bottomrule
			\end{tabular}	
			\label{Table:Exp}
		\end{center}
	\end{table}
	
	For other values of the rate $\lambda$, results can be derived from the case of a standard exponentially distributed covariate via equivariance (Theorem~\ref{theorem:equivariant}) by letting $a = 1 / \lambda$ and $b = 0$:
	If we seek a locally $D$-optimal subsampling design at $\beta_{1} < 0$ when the rate is $\lambda$, we can first construct a locally $D$-optimal design at $\beta_{1} / \lambda$ for a standard exponentially distributed covariate and then divide the obtained boundary points by $\lambda$.
	For example, when $\lambda = 2$, $\beta_{1} = - 1$ and the subsampling proportion is $\alpha = 0.10$, we get the boundary points $0.05181/2$, $2.92225/2$, and $5.44835/2$ from the second line highlighted in the second block of Table~\ref{Table:Exp} such that the locally $D$-optimal subsampling design wanted is supported on the two intervals $[0, 0.0259]$ and $[1.4611, 2.7242]$.
		
	When the subsampling proportion $ \alpha $ goes to zero, the locally $ D $-optimal subsampling design apparently tends to its counterpart in classical optimal design theory  
	which assigns equal weight $1 / 2$ to two support points $ x_{1}^{*} = 0 $ and $ x_{2}^{*} = - 2 / \beta_{1} $ \cite[see e.\,g.][]{rodriguez2007locally}.
	In particular, we observe $ a_2 < x_{2}^{*} < a_3 $ for all numerically obtained values of $ a_2 $ and $ a_3 $.
	
	On the contrary, we find that scenario~(ii) appears for large values of $ \alpha $.
	This happens when the slope $\beta_{1}$ is strongly negative.
	More precisely, given $ \alpha $, there is a crossover point $ \beta_{1}^{*} $ such that the single interval design with density $f_{\xi^{*}} = f_{X} \1_{[0, q_{\alpha}]}$ is locally $D$-optimal at $\beta_{1}$ for all $\beta_{1} \geq \beta_{1}^{*}$
	This crossover point becomes stronger negative when $ \alpha $ gets smaller and apparently tends to $- \infty$ as $ \alpha \to 0$.
	On the other hand, when $\alpha$ gets larger, the crossover point apparently tends to zero.
	In Table~\ref{Table:ExpCrossover}, we give numerical results for the crossover point $ \beta_{1}^{*} / \lambda$ for selected values of $ \alpha $
	together with the quantile $q_{\alpha}$, the setting $x_{2}^{*}$ of the locally $D$-optimal unbounded design and their ratio.
	This shows that, for scenario~(ii) to apply, the quantile $q_{\alpha}$ has to be substantially larger than $x_{2}^{*}$. 
	Vice versa, for given slope $\beta_{1} < 0$, there is a critical subsampling proportion $\alpha^{*}$ such that  the single interval design is locally $D$-optimal for larger subsampling proportions $\alpha \geq \alpha^{*}$.
	In particular, when $ \beta_{1} = 0 $, only scenario~(i) applies \citep[see][]{reuter2023optimal} and, hence, $\alpha^{*} = 1$.
	
	We further notice that the percentage of mass on the left interval $ [0, a_1] $ is generally larger than $ 50\% $ for $ \beta_{1} $ closer to zero which coincides with what we have seen in \cite{reuter2023optimal} for the case $ \beta_{1} = 0 $.
	There, observations from the right tail are more informative and thus more observations are needed on the left tail. 
	Conversely, the percentage of mass on $ [0, a_1] $ is smaller than $ 50\% $ for strongly negative $ \beta_{1} $.
	Figure~\ref{Figure:Exp} depicts the locally $ D $-optimal subsampling designs for $ \alpha = 0.5, 0.9 $ and $ \beta_{1} = - 1 $ along with the corresponding sensitivity functions.
	The horizontal dotted line represents the threshold $ s^{*} $ from Theorem~\ref{theorem:opt-design}.
	The vertical dotted lines depict the boundary points.	
	While smaller subsampling proportions $ \alpha \le 0.1 $ are typically of interest in the context of subsampling,
	our selection of larger subsampling proportions $ \alpha $ has been made for the sake of clarity and visibility in the tables and figures.

	\begin{figure}[htb]
		\centering
		\begin{subfigure}[t]{.475\textwidth}
			\centering
			\includegraphics[width=\linewidth]{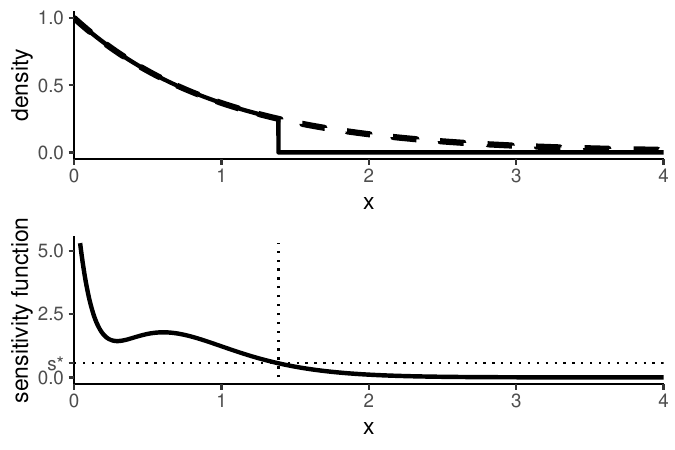}
			\caption{$\beta_{1} = - 4$, $\alpha = 0.75$}
		\end{subfigure}
		\hfill
		\begin{subfigure}[t]{.475\textwidth}
			\centering
			\includegraphics[width=\linewidth]{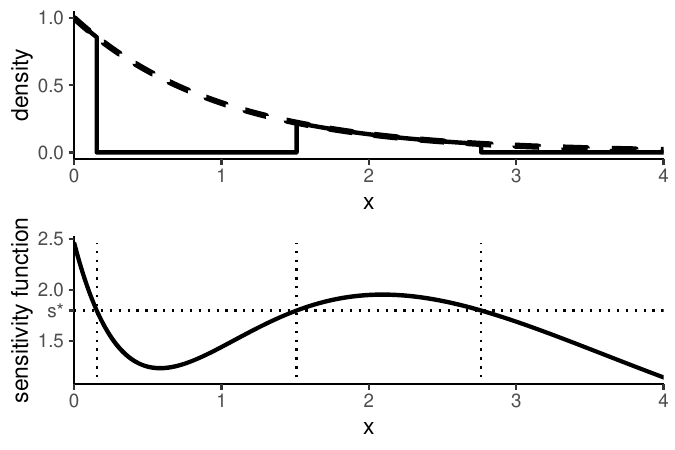}
			\caption{$\beta_{1} = - 1$, $\alpha = 0.3$}
		\end{subfigure}
		\caption{Density of the locally optimal design (solid) at $ \beta_{1} $
			and the standard exponential distribution (dashed, upper panels),
			and corresponding sensitivity functions (lower panels)
			for $\beta_{1} = - 4$, $\alpha = 0.75$ (left) and $\beta_{1} = - 1$, $\alpha = 0.3$ (right)}
		\label{Figure:Exp}
	\end{figure}	
	\begin{table}[h]
		\begin{center}
			\caption{Numerical values for the standardized crossover point $\beta_{1}^{*} / \lambda$  
				for an exponentially distributed covariate}
			\begin{tabular}{c|cccc} \toprule
				
				     {$\alpha$} & 	{$\beta_{1}^{*} / \lambda$} & {$\lambda q_{\alpha}$} & {$\lambda x_{2}^{*}$} & {$q_{\alpha} / x_{2}^{*}$} 
				     \\
				     \midrule
					0.01 & -360.34840 & 0.01005 & 0.00556 & 1.81081
					\\
					0.10 & \phantom{0}-34.60684 & 0.10536 & 0.05779 & 1.82310
					\\
					0.30 & \phantom{0}-10.41165 & 0.35667 & 0.19209 & 1.85679
					\\
					0.50 & \phantom{00}-5.49454 & 0.69314 & 0.36400 & 1.90426
					\\
					0.75 & \phantom{00}-2.89534 & 1.38629 & 0.69077 & 2.00690
					\\
					0.90 & \phantom{00}-1.86128 & 2.30259 & 1.07453 & 2.14288
					\\
				\bottomrule
			\end{tabular}	
			\label{Table:ExpCrossover}
		\end{center}
	\end{table}
\end{example}

\begin{example}[uniform distribution]
	\label{example:Unif}
	We assume the covariate to be uniform random on an interval $ [x_{\min}, x_{\max}] $ with density $ f_{X}(x) = \frac{1}{x_{\max} - x_{\min}} \1_{[x_{\min}, x_{\max}]}(x) $.
	The condition of finite moments $ m_{k}(\xi, \beta_{1}) $ is satisfied for all $ \beta_{1} $.
	
	For $\beta_{1} < 0$, let 
	\begin{equation*}
		g_{0}(t) = 
			\frac{\exp(\beta_{1} t)}{|\beta_{1}| (x_{\max} - x_{\min})} ,\ 
		g_{1}(t) = 
			\left(t + \frac{1}{|\beta_{1}|}\right) g_{0}(t)
		\mathrm{\ and\ }
		g_{2}(t) = 
			t^2 g_{0}(t) + \frac{2}{|\beta_{1}|} g_{1}(t) \, .
	\end{equation*}	
	In scenario~(i), unlike in Example~\ref{example:Exp}, the support of the covariate is bounded from above and thus the rightmost boundary point $ a_3 $ may be larger than $x_{\max}$.
	We denote the essential supremum of $ \xi^{*} $ by $ \tilde{a}_3 = \min(a_3, x_{\max}) $.
	Then, in scenario~(i), the entries in $ \M(\xi^{*}, \B) $ are 
	\[
	m_{k}(\xi^{*}, \beta_{1}) = g_{k}(x_{\min}) - g_{k}(a_1) + g_{k}(a_2) - g_{k}(\tilde{a}_3) \, , 
	\quad
	k = 0, 1, 2,
	\]
	while in scenario~(ii), when there is only one interval, they reduce to
	$ m_{k}(\xi^{*}, \beta_{1}) = g_{k}(x_{\min}) - g_{k}(q_{\alpha})$ where $q_{\alpha} = (1 - \alpha) x_{\min} + \alpha x_{\max}$.
		
	For the case of a uniform distribution on the unit interval ($ x_{\min} = 0 $ and $ x_{\max} = 1 $), optimal boundary points are given in Table~\ref{Table:Unif} for selected values of $ \alpha $ and $ \beta_{1} < 0 $.
	In addition, we give the values for the amount $ F_{X}(a_1) $ as well as the percentage of mass the design $ \xi^{*} $ places on the left interval $ [0, a_1] $.
	We also add formally the result for $ \beta_{1} = 0 $ for reference \citep[see][]{reuter2023optimal}.
		
	\begin{table}[h]
		\begin{center}
			\caption{Numerical values for the boundary points 
				$a_1$, $ a_2 $, $a_3$ and $q_{\alpha}$, respectively, 
				for selected values of the subsampling proportion $\alpha$ and slope parameter $ \beta_{1} $ 
				in the case of a uniformly distributed covariate on $ [0,1] $}
			\begin{tabular}{ll|ccccc} \toprule
				{$\alpha$} & {$\beta_{1}$} & {$a_1$} & {$a_2$} & {$a_3$, $q_{\alpha}$} & {$F_{X}(a_1)$} 
				& {\% of mass on $[0, a_1]$} 
				\\ 
				\midrule
				\multirow{4}{*}{0.01} & \phantom{-}0 & 0.00500 & 0.99500 & - & 0.00500 & 50.00
				\\
				& -2 & 0.00498 & 0.99498 & -       & 0.00498 & 49.75 
				\\
				& -4 & 0.00495 & 0.49994 & 0.50499 & 0.00495 & 49.51 
				\\
				& -8 & 0.00490 & 0.24989 & 0.25498 & 0.00490 & 49.04 
				\\ 
				\midrule
				\multirow{4}{*}{\textbf{0.10}} & \phantom{-}0 & 0.05000 & 0.9500 & - & 0.05000 & 50.00
				\\
				& -2 & 0.04772 & 0.94772 & -       & 0.04772 & 47.72 
				\\
				& \textbf{-4} & \textbf{0.04578} & \textbf{0.49506} & \textbf{0.54928} & \textbf{0.04578} & \textbf{45.78} 
				\\
				& -8 & 0.04269 & 0.24155 & 0.29887 & 0.04269 & 42.69 
				\\ 
				\midrule
				\multirow{4}{*}{0.30} & \phantom{-}0 & 0.15000 & 0.8500 & - & 0.15000 & 50.00
				\\
				& -2 & 0.13271 & 0.83271 & -       & 0.13271 & 44.24 
				\\
				& -4 & 0.12102 & 0.46678 & 0.64577 & 0.12102 & 40.34 
				\\
				& -8 & 0.10847 & 0.20165 & 0.39318 & 0.10847 & 36.16 
				\\ 
				\midrule
				\multirow{4}{*}{0.50} & \phantom{-}0 & 0.25000 & 0.7500 & - & 0.25000 & 50.00
				\\
				& -2 & 0.20993 & 0.70993 & -       & 0.20993 & 41.99 
				\\
				& -4 & 0.18578 & 0.42624 & 0.74046 & 0.18578 & 37.16 
				\\
				& -8 & - & -       & 0.50000       & - & -  
				\\
				\bottomrule
			\end{tabular}	
			\label{Table:Unif}
		\end{center}
	\end{table}
	
	Apart from the situation that $a_3 > x_{\max}$ indicated by a hyphen ($-$) in the table when $\alpha = 0.5$ and $\beta_{1} = - 2$, the results are similar to those in Example~\ref{example:Exp}:
	More weight is given to the left interval $ [0,a] $ when $ \beta_{1} $ is closer to zero.  
	When the subsampling proportion $\alpha$ becomes small, the locally $ D $-optimal subsampling design approaches the locally $ D $-optimal unbounded design equally supported on $ x_1^* = 0 $ and $ x_2^* = - 2 / \beta_{1} $.
	For large values of $\alpha$, the two intervals are merged into one (e.\,g.\ for $\alpha = 0.50$ and $\beta_{1} = - 8$). 
	Figure~\ref{Figure:Unif1} depicts the locally $ D $-optimal subsampling designs along the corresponding sensitivity functions in scenario~(ii) of a single supporting interval for $ \xi^{*} $ in the left panel. 
	The right panel exhibits scenario~(i) of $ \xi^{*} $ supported on two proper intervals. 
	The horizontal dotted line depicts the threshold $ s^{*} $. The vertical dotted lines represent the boundary points $a_1$, $a_2$, and $a_3$.
	The situation when $ a_3 > x_{\max} $ is displayed in Figure~\ref{Figure:Unif2}.
	\begin{figure}[htb]
		\centering
		\begin{subfigure}[t]{.475\textwidth}
			\centering
			\includegraphics[width=\linewidth]{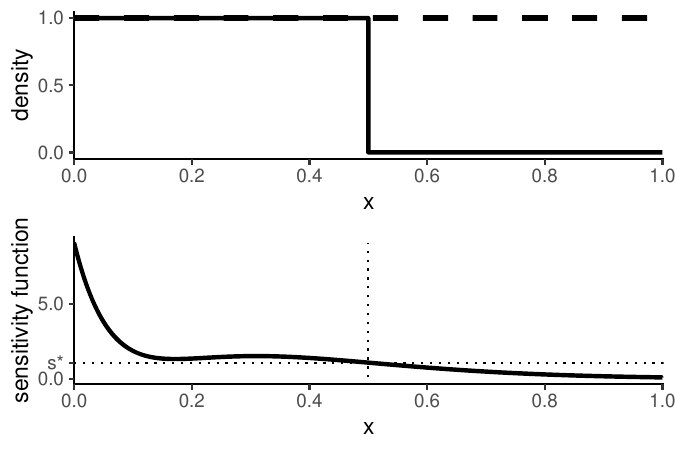}
			\caption{$\beta_{1} = - 8$, $\alpha = 0.5$}
		\end{subfigure}
		\hfill
		\begin{subfigure}[t]{.475\textwidth}
			\centering
			\includegraphics[width=\linewidth]{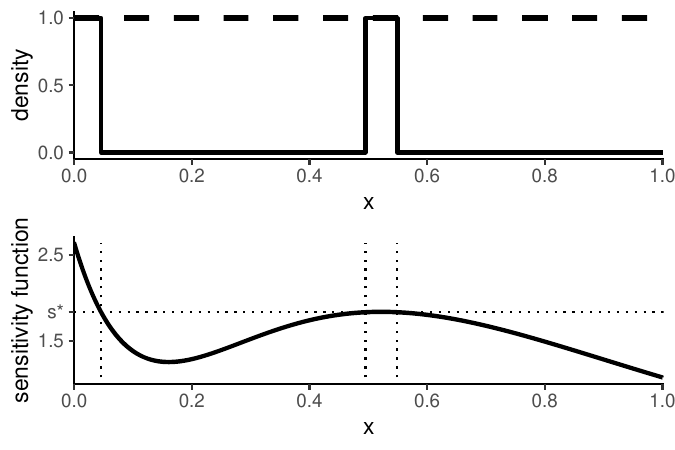}
			\caption{$\beta_{1} = - 4$, $\alpha = 0.1$}
		\end{subfigure}
		\caption{Density of the locally optimal design (solid) at $ \beta_{1}$ 
			for a uniformly distributed covariate on $ [0,1] $ (dashed, upper panels),
			and sensitivity functions (lower panels)
			for $\beta_{1} = - 8$, $\alpha = 0.5$ (left) and $\beta_{1} = - 4$, $\alpha = 0.1$ (right)}
		\label{Figure:Unif1}
	\end{figure}	
	\begin{figure}[htb]
		\centering
		\includegraphics[width=.475\linewidth]{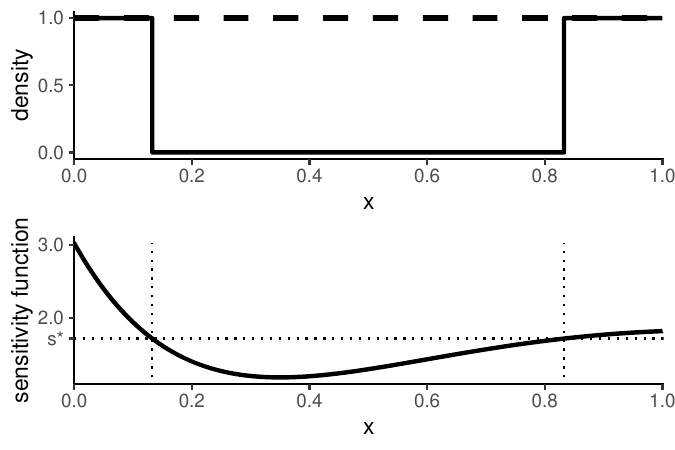}
		\caption{Density of the locally optimal design (solid) at $ \beta_{1}$ 
			for a uniformly distributed covariate on $ [0,1] $ (dashed, upper panel),
			and sensitivity functions (lower panel)
			for $\beta_{1} = - 2$, $\alpha = 0.3$}
		\label{Figure:Unif2}
	\end{figure}
	
	Because of the symmetry of the uniform distribution, locally $D$-optimal subsampling designs can be derived for positive values of the slope $\beta_{1}$	via equivariance with respect to sign change by letting $a = -1$ and $b = 1$ in Theorem~\ref{theorem:equivariant}.
	For example, when $\beta_{1} = 4$ and $\alpha = 0.10$, the optimal boundary points can be obtained from the third line highlighted in the second block of Table~\ref{Table:Unif} as $1 - 0.04578$, $1 - 0.49506$, and $1 - 0.54928$ such that the locally $D$-optimal subsampling design is then supported on the two intervals $[0.45072, 0.50494]$ and $[0.95422, 1]$.
	
	Further, for other ranges $[x_{\min}, x_{\max}]$ of the uniform covariate, optimal subsampling designs can be obtained by equivariance (Theorem~\ref{theorem:equivariant}) as well by letting $a = x_{\max} - x_{\min}$ and $b = x_{\min}$.
\end{example}

\section{Efficiency}
\label{sec:efficiency}
We want to study the performance of random subsampling as well as some heuristic subsampling designs in the style of IBOSS \cite[see][]{wang2019information} to quantify the gain in using a locally $ D $-optimal subsampling design.
Besides, we are interested in the quality of the heuristic designs and how they compare to random subsampling. 
Further, we want to investigate the performance of designs when the parameter is misspecified. 
Specifically, a subsampling design $ \xi^{*}(\B') = \argmax \det(\M(\xi,\B'))$ that is locally $ D $-optimal at $ \B' $ is studied when the true parameter is $ \B $.  
The performance of a design $ \xi $ may be compared to the locally $ D $-optimal subsampling design $ \xi^{*}(\B) $ using $ D $-efficiency.
The $D$-efficiency of a subsampling design $ \xi $ with mass $\alpha$ is defined as
\begin{equation*}
	\label{eq:efficiency}
	\eff_{D,\alpha}(\xi,\B) = \left(\frac{\det(\M(\xi,\B))}{\det(\M(\xi^{*}(\B),\B))}\right)^{1/2}.
\end{equation*}
For this definition the homogeneous version $(\det(\M(\xi,\B)))^{1/2}$
of the $D$-criterion is used which satisfies the homogeneity condition
$(\det(\nu \M))^{1/2} = \nu (\det(\M))^{1/2}$
for all $\nu > 0$
\cite[see][Chapter~6.2]{pukelsheim1993optimal}.
Note that by Remark~\ref{remark:beta0}, the efficiency $ \eff_{D,\alpha}(\xi,\B) $ does not depend on $ \beta_{0} $. 

As uniform random subsampling we define the design $\xi_{\alpha}$ of size $\alpha$,
which has density $f_{\xi_{\alpha}}(x) = \alpha f_X(x)$.
The information matrix of $ \xi_{\alpha} $ is given by
$ \M(\xi_{\alpha},\B) = \alpha \M(\xi_{1},\B) $.
Here, $ \xi_{1} $ represents the full sample with information matrix
$ \M(\xi_{1},\B) =  \int \exp(\beta_{0} + \beta_{1} x) \f(x)\f(x)^{\top} f_{X}(x)\diff x $.
Thus, the $D$-efficiency $\eff_{D,\alpha}(\xi_{\alpha},\B)$ of uniform random subsampling
can be nicely interpreted as noted in \cite{reuter2023optimal}:
for a fixed full sample size $ n $, the required subsample size (mass) $ \tilde{\alpha} $ needed to achieve the same precision (measured by the $D$-criterion), 
compared to utilizing a locally $D$-optimal subsampling design $\xi^*$ with mass $\alpha$,
is given by the inverse of the efficiency, $\eff_{D,\alpha}(\xi_{\alpha},\B)^{-1}$, multiplied by $\alpha$, i.\,e.~$ \tilde{\alpha} = \alpha/ \eff_{D,\alpha}(\xi_{\alpha},\B) $.
For instance, if the efficiency $\eff_{D,\alpha}(\xi_{\alpha},\B)$ equals
$0.5$, then twice the number of observations would be needed under 
uniform random sampling compared to a locally $D$-optimal subsampling design of mass $\alpha$.
Naturally, the full sample has higher information than any proper subsample
such that, for uniform random subsampling, 
$\eff_{D,\alpha}(\xi_{\alpha},\B) \geq \alpha$ holds for all $\alpha$.

Further, we analyze the efficiency of two heuristic designs.
Again we only consider the case $ \beta_{1} < 1 $.
Let the $ \alpha $-quantile of the covariate $ X_{i} $ be denoted by $ q_{\alpha} $. 
First, we consider the one-sided design $ \xi_{os} $ with density 
$ f_{\xi_{os}}(x) = f_{X}(x)\1_{(-\infty,q_{\alpha}]}(x) $ that assigns all of its mass on the left tail of the distribution of the covariate motivated by its optimality for large $\alpha$.
Second, we study the two-sided design $ \xi_{ts} $ with density
$ f_{\xi_{ts}}(x) = f_{X}(x)\1_{(-\infty,q_{\alpha/2}]\cup [q_{1-\alpha/2},\infty)}(x) $ that allocates equal mass $\alpha / 2$ on both tails of the distribution in the style of the IBOSS method \cite[see][]{wang2019information}. 

\begin{example}[exponential distribution]
	\label{example:Exp-Eff}
	As in Example~\ref{example:Exp}, we assume that the covariate $ X_{i} $ is exponentially distributed with rate $ \lambda $.
		
	Because uniform random subsampling $ \xi_{\alpha} $ as well as the one- and two-sided designs $ \xi_{os} $ and $ \xi_{ts} $ are equivariant under location-scale transformations, their efficiency depends only on the slope and the rate by the ratio $ \beta_{1} / \lambda $.
	In Figure~\ref{Figure:Exp-Eff}, we depict the efficiency of these designs  for $ \beta_{1} / \lambda = - 1 $ and $ - 4 $ in dependence on the subsampling proportion $ \alpha $.
	The efficiency of uniform random subsampling is quite low for reasonable proportions $ \alpha \le 0.1 $ and, hence, the gain in using the D-optimal subsampling design is substantial.
	Similarly, the efficiency of the one- and the two-sided design is small for $ \alpha \le 0.1 $ and apparently tends to zero for $\alpha \to 0$ which may be explained by the fact that these designs miss observations close to the location $x_{2}^{*}$ of the locally $D$-optimal unbounded design.
	This feature does not apply to uniform random subsampling such that, for very small subsampling proportions, both the one- and the two-sided design is severely less efficient than uniform random subsampling. 	
	
	As is to be expected, the two-sided IBOSS-like design $ \xi_{ts} $ performs much better for $ \beta_{1} $ near zero. 
	In particular, for $ \beta_{1} = 0 $, the two-sided design $ \xi_{ts} $ only differs slightly from the locally $ D $-optimal subsampling design is $ \xi^{*}$ and has a high efficiency throughout \citep[see][]{reuter2023optimal}.
	Conversely, the one-sided design $ \xi_{os} $ performs better for strongly negative $ \beta_{1} $.
	The vertical dotted line in Figure~\ref{Figure:Exp-Eff} displays the crossover point $ \alpha^{*} $.
	For all $ \alpha > \alpha^{*} $, the one-sided design is the $ D $-optimal subsampling design.
	
	We observe similar behavior in Figure~\ref{Figure:Exp-Eff-betaXAxis}.
	Predictably, the one-sided design performs better for strongly negative $ \beta_{1} $ and the two-sided design is better for $ \beta_{1} $ closer to zero.
	Notably, the two-sided design exhibits a nonmonotonic behavior: It performs worst for $ \beta_{1} / \lambda \approx -3.64 $ ($ \eff_{D,\alpha}(\xi_{ts},\B) = 0.07974506 $) and attains a local maximum at $ \beta_{1} / \lambda \approx -0.40 $ ($ \eff_{D,\alpha}(\xi_{ts},\B) = 0.9988009 $).
	Further, we again see that uniform subsampling generally performs better for $ \beta_{1} $ closer to zero, though it performs best for $ \beta_{1} / \lambda \approx -1.05 $ ($ \eff_{D,\alpha}(\xi_{ts},\B) = 0.6978610 $).
	\begin{figure}[htb]
		\centering
		\begin{subfigure}[t]{.475\textwidth}
			\centering
			\includegraphics[width=\linewidth]{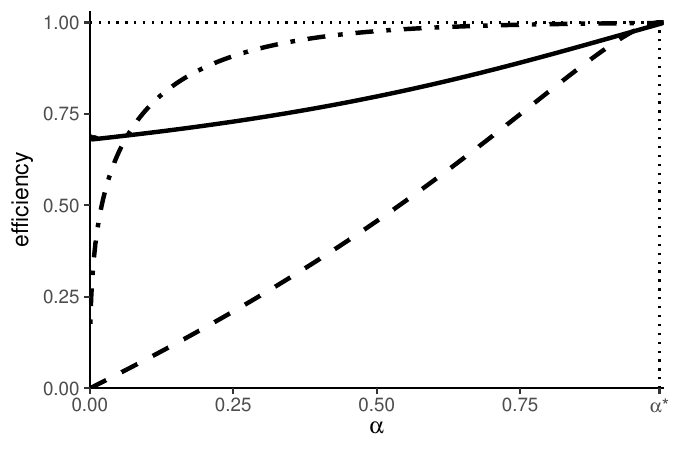}
			\caption{$\beta_{1} / \lambda = - 1$}
		\end{subfigure}
		\hfill
		\begin{subfigure}[t]{.475\textwidth}
			\centering
			\includegraphics[width=\linewidth]{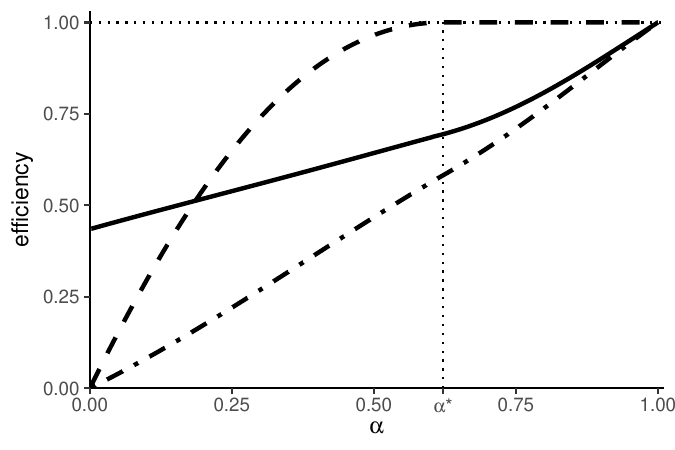}
			\caption{$\beta_{1} / \lambda = - 4$.}
		\end{subfigure}
		\caption{$ D $-efficiency of uniform random subsampling (solid), one-sided (dashed), and two-sided (dot-dashed) subsampling design in dependence on the subsampling proportion $\alpha$ for slope-rate ratio $ \beta_{1} / \lambda = - 1 $ (left) and $ - 4 $ (right) for an exponentially distributed covariate}
		\label{Figure:Exp-Eff}
	\end{figure}

	\begin{figure}[htb]
		\centering
		\includegraphics[width=.475\linewidth]{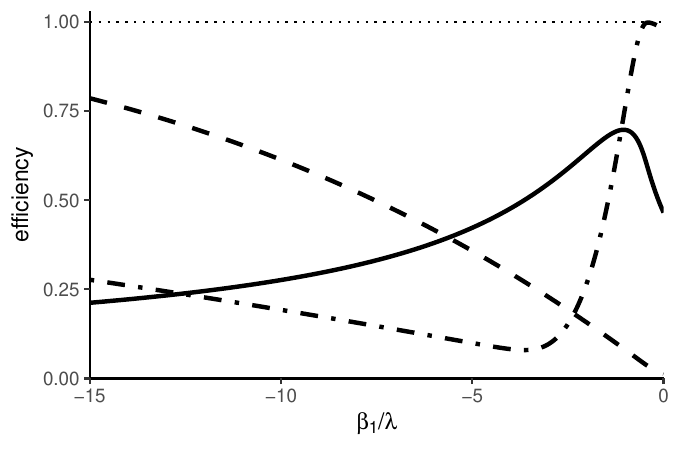}
		\caption{$ D $-efficiency of uniform random subsampling (solid), one-sided (dashed), and two-sided (dot-dashed) subsampling design in dependence on the slope-rate ratio $ \beta_{1} / \lambda $ for subsampling proportion $\alpha = 0.1 $ and an exponentially distributed covariate}
		\label{Figure:Exp-Eff-betaXAxis}
	\end{figure}

	For strongly negative $ \beta_{1} $, the behavior of the efficiency of the three designs in Figure~\ref{Figure:Exp-Eff-betaXAxis} gives additional insight. 
	As $ \beta_{1} \to -\infty $, the efficiency of uniform random subsampling converges to its lower bound $ \alpha $ whereas the efficiency of both one- and two-sided design converge to one. 
	Most of the information is concentrated on the covariate values close to zero. 
	Thus, for strongly negative $ \beta_{1} $ the two heuristic designs as well as the $ D $-optimal subsampling design have almost all the information of the full sample. 
	This limiting behavior is not presented in Figure~\ref{Figure:Exp-Eff-betaXAxis} in order to preserve intelligibility for $ \beta_{1} $ closer to zero.
	
	Finally, we consider the efficiency of locally $ D $-optimal subsampling designs $ \xi^{*}(\B^{\prime}) $, when the nominal value $ \beta_{1}^{\prime} $ is misspecified and differs form the true slope $ \beta_{1} $.
	The left panel of Figure~\ref{Figure:Exp-misspecified-Eff} illustrates the efficiency of $ \xi^{*}(\B^{\prime}) $ in dependence on the subsampling proportion $ \alpha $ for selected values of the true ratio $ \beta_{1} / \lambda $, when the nominal value is $ \beta_{1}^{\prime} / \lambda = - 1 $.
	For all values we find that the efficiency of the design $ \xi^*(\B^{\prime}) $ under misspecification declines with decreasing $ \alpha $.  
	When the deviation of the parameter is rather small, $ \beta_{1} / \lambda = - 0.8$ and $ \beta_{1} / \lambda = - 1.2$, the designs under misspecification are still very efficient, with efficiency above $ 0.98 $ for $ \alpha = 0.01 $.
	For larger deviations however, the efficiency can drop drastically. 
	In particular, when $ \beta_{1} / \lambda $ is closer to $ 0 $, the efficiency is more strongly negatively affected than when the deviation of $ \beta_{1} / \lambda $ is away from zero. 
	In the right panel of Figure~\ref{Figure:Exp-misspecified-Eff}, we exhibit the efficiency for various values of the nominal slope-rate ratio in dependence on the true value when the subsampling proportion is $\alpha = 0.1$. 
	The nominal values are indicated by vertical dotted lines.
	
	It can be seen that the efficiency decreases faster for $ \beta_{1} / \lambda $ towards zero than for stronger negative values.
	In particular, the efficiency increases again when $ \beta_{1} / \lambda $ goes to $- \infty$.
		
	\begin{figure}[htb]
		\centering
		\begin{subfigure}[t]{.475\textwidth}
			\centering
			\includegraphics[width=\linewidth]{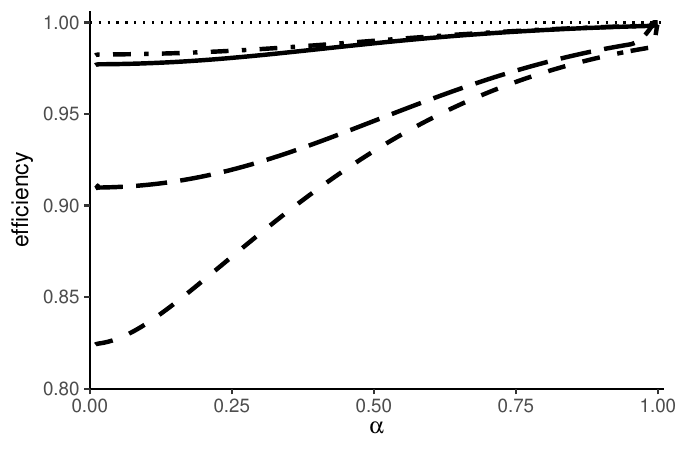}
			\caption{True parameter $ \beta_{1} / \lambda = - 0.5$ (dashed), $ - 0.8$ (solid), $ - 1.2$ (dot-dashed), and $ - 1.5$ (long dashed)}
		\end{subfigure}
		\hfill
		\begin{subfigure}[t]{.475\textwidth}
			\centering
				\includegraphics[width=\linewidth]{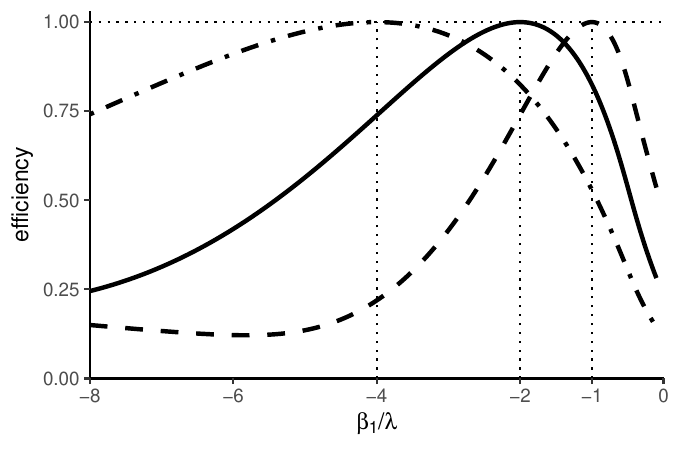}
			\caption{Locally $D$-optimal subsampling designs for $ \beta_{1}^{\prime} / \lambda = - 1$ (dashed), $- 2$ (solid), and $- 4$ (dot-dashed)}
		\end{subfigure}
		\caption{Efficiency of the locally $ D $-optimal subsampling design for $ \beta_{1}^{\prime} / \lambda = - 1$ and various subsampling proportions $ \alpha $ (left) and for subsampling proportions $ \alpha = 0.1$ and various values  of the nominal slope-rate ratio $ \beta_{1}^{\prime} / \lambda $ (right) in dependence on the true slope-rate ratio $ \beta_1 / \lambda $ for an exponentially distributed covariate}
		\label{Figure:Exp-misspecified-Eff}
	\end{figure}
\end{example}

\section{Concluding Remarks}
\label{sec:discussion}
Our investigation centers on a theoretical approach to evaluate subsampling designs under distributional assumptions on the covariate in the case of Poisson regression on a single covariate.
We adjust a standard equivalence theorem to Poisson regression, given a general distribution of the covariate and negative slope parameter $ \beta_{1} $.
This equivalence theorem also characterizes the support of the locally $ D $-optimal subsampling design and allows us to derive such designs for a given covariate and slope parameter.
Then, we establish a theorem to identify locally $ D $-optimal subsampling designs under a scale-location transformation of the covariate and simultaneous rescaling of the slope parameter.
We make use of this to give a corollary to the equivalence theorem for $ \beta_{1} > 0 $. 
It is worthwhile noting that many of the results can be extended from $ D $-optimality to other criteria within Kiefer's class of $ \Phi_{q} $-optimality criteria, including, in particular, linear criteria
The derivation relies mostly on the fact that the sensitivity function can be factorized into the exponential function and a quadratic polynomial, rather than its specific form.
Our efficiency analysis shows, among other things, that heuristic one- or two-sided designs can be highly efficient under certain circumstances, however, they display substantial loss in efficiency for the most relevant small subsampling proportions.
Addressing uncertainty about the parameter $ \beta_{1} $ and the covariate distribution may involve an initial random subsampling step, before deploying the locally $ D $-optimal subsampling design. 
Lastly, note that the results presented here may be extended to polynomial Poisson regression, where the linear predictor is a polynomial of degree $ q $ in the covariate $ X_{i} $. 
Then, the equation $ \psi(x,\xi,\B) = s $ has at most $ 2q + 1 $ solutions and the support of $ \xi^{*} $ is the union of at most $ q+1 $ intervals.

\section*{Acknowledgments}
The work of the first author is supported by the Deutsche Forschungsgemeinschaft (DFG, German Research Foundation) - 314838170, GRK 2297 MathCoRe.

\appendix 
\section{Proofs}

Before we establish the equivalence theorem (Theorem~\ref{theo:support}),
we introduce some technical tools:
The directional derivative of the $ D $-criterion at design $ \xi $ in the direction of a design $ \eta $ is
$ \Psi(\xi,\eta,\B) = \trace(\M(\xi,\B)^{-1}\M(\eta,\B)) - 2 $.
Here, $ \eta $ may be any design of total mass $ \alpha $ which is not necessarily required to have a density bounded by $ f_X $.
The sensitivity function $ \psi(x,\xi,\B) =  \trace(\M(\xi,\B)^{-1}\M(\xi_{x},\B)) $ is the essential part of the directional derivative at $ \xi $ in the direction of a single point design $ \xi_x $ with all mass $ \alpha $ at point $ x $.
Then
\begin{align*}
	\psi(x,\xi,\B) 
	&= \alpha \exp(\beta_{0} + \beta_{1} x) \f(x)^{\top} \M(\xi,\B)^{-1} \f(x) \\
	&= \frac{\alpha}{d(\xi,\beta_{1})} \exp(\beta_{1} x)
	(m_{0}(\xi,\beta_{1}) x^2 - 2 m_{1}(\xi,\beta_{1}) x + m_{2}(\xi,\beta_{1})) 
\end{align*}
does not depend on $\beta_{0}$ and will be denoted by $\psi(x,\xi,\beta_{1})$, for short.
Note that, for any continuous subsampling design $ \xi $, the information matrix $ \M(\xi,\B) $ is positive definite and, hence, $ \psi(x,\xi,\beta_{1})$ is well-defined. 

For convenience, we reproduce an equivalence theorem for subsampling designs in a general model context which follows from Corollary~1(c) in \cite{sahm2001note}.
\begin{theorem}
	\label{theorem:opt-design}
	Let condition
	\\
	(A) $\Prob(\psi(X_{i}, \xi, \beta_{1}) = s) = 0$ for any $\xi$ and $s$
	\\
	be satisfied.
	Then the subsampling design $\xi^*$ is locally $D$-optimal at $ \B $ if and only if
	there exist a set $ \XX^{*} $ and a threshold $ s^* $ such that
	\begin{itemize}
		\item[(i)] $ \xi^{*} $ has density $ f_{\xi^{*}}(x) = f_{X}(x) \1_{\XX^{*}}(x) $
		\item[(ii)] $\psi(x,\xi^{*},\beta_{1}) \geq s^*$ for $x \in \XX^*$, and
		\item[(iii)] $\psi(x,\xi^{*},\beta_{1}) < s^*$ for $x \not\in \XX^*$.
	\end{itemize}	
\end{theorem}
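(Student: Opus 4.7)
The plan is to recognize the subsampling problem as a concave optimization over a convex feasible set and to apply the bang-bang principle via Lagrangian duality. Let $\Xi_{\alpha}$ denote the set of measures $\xi$ on $\R$ with density $f_{\xi}$ satisfying $0 \le f_{\xi}(x) \le f_{X}(x)$ almost everywhere and $\int f_{\xi}(x)\,\diff x = \alpha$. Because $\M(\xi, \B)$ is affine in $\xi$ and $\log\det$ is concave on the cone of positive-definite matrices, the $D$-criterion $\xi \mapsto \log\det \M(\xi, \B)$ is concave on $\Xi_{\alpha}$. Standard convex analysis then gives: $\xi^*$ is locally $D$-optimal if and only if the directional derivative satisfies $\Psi(\xi^*, \eta, \B) \le 0$ for every $\eta \in \Xi_{\alpha}$.

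A direct computation of the Gâteaux derivative of $\log\det$ along the mixture $(1-t)\xi^* + t\eta$ yields, after using $\M(\eta, \B) = \int \M(x, \B)\,\eta(\diff x)$, the representation
\begin{equation*}
    \Psi(\xi^*, \eta, \B) = \frac{1}{\alpha}\int \psi(x, \xi^*, \beta_{1})\,\eta(\diff x) \; - \; 2 .
\end{equation*}
Since $\int \psi(x, \xi^*, \beta_{1})\,\xi^*(\diff x) = 2\alpha$ (trace of the identity, up to scaling), non-positivity of $\Psi(\xi^*, \eta, \B)$ for every $\eta \in \Xi_{\alpha}$ is equivalent to $\xi^*$ itself being a maximizer of the linear functional $\eta \mapsto \int \psi(x, \xi^*, \beta_{1}) f_{\eta}(x)\,\diff x$ over $\Xi_{\alpha}$.

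The reduced problem is a linear program with box constraints $0 \le f_{\eta} \le f_{X}$ and a single total-mass constraint $\int f_{\eta} = \alpha$. The bang-bang principle identifies its maximizers as densities of the form $f_{\eta}(x) = f_{X}(x)\,\1_{\{\psi(\cdot,\xi^*,\beta_1) > s^*\}}(x)$, with a Lagrange multiplier $s^*$ chosen so the mass equals $\alpha$; possible ties on the level set $\{\psi(\cdot,\xi^*,\beta_1) = s^*\}$ are ruled out by assumption~(A), which forces $\Prob(\psi(X_i, \xi^*, \beta_{1}) = s^*) = 0$. Thus the maximizer is unique up to null sets and has the claimed shape $f_{\xi^*} = f_{X} \1_{\XX^*}$ with $\XX^* = \{\psi(\cdot, \xi^*, \beta_{1}) \ge s^*\}$, which is precisely conditions~(i)--(iii).

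Combining the two equivalences yields the theorem. For sufficiency: if (i)--(iii) hold for some $\XX^*$ and $s^*$, then $\xi^*$ maximizes the linear functional above, so $\Psi(\xi^*, \eta, \B) \le 0$ for all $\eta \in \Xi_{\alpha}$, and concavity of the $D$-criterion gives global optimality. For necessity: concavity forces the directional-derivative condition, and the bang-bang analysis delivers $s^*$ and $\XX^*$ of the required form. The main technical obstacle is the rigorous existence of the Lagrange multiplier $s^*$ for this infinite-dimensional program together with the strong-duality step that identifies maximizers of the reduced linear functional with maximizers of the original concave criterion; this is exactly the content of Corollary~1(c) in \cite{sahm2001note}, which we would invoke, so our proof reduces to verifying concavity, computing $\Psi$, and checking hypothesis~(A) for uniqueness of the threshold.
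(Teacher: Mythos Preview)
Your proposal is correct and aligns with the paper's treatment: the paper does not prove this theorem at all but merely states that it ``follows from Corollary~1(c) in \cite{sahm2001note}'' and reproduces it for convenience. You supply the natural argument behind that citation---concavity of the $D$-criterion, the directional-derivative characterization, and the Neyman--Pearson/bang-bang identification of the maximizer of the resulting linear functional---and you end up invoking the very same reference for the technical justification, so your approach is essentially the one the paper relies on, only made explicit.
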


Next we establish that condition~(A) holds for the Poisson regression model. 
\begin{lemma}
	\label{Lemma:roots}
	Given $\xi$ and $s$, the equation $ \psi(x,\xi,\beta_{1}) = s $ has, at most, three different solutions in $x$. 
\end{lemma}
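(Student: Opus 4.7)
The plan is to reduce the equation to counting zeros of a function of the form $\exp(\beta_1 x)q(x) - c$ where $q$ is a (genuine) quadratic polynomial, and then apply Rolle's theorem.

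First I would observe that, by the explicit formula for $\psi$, the equation $\psi(x,\xi,\beta_1) = s$ is equivalent to
\[
	h(x) := \exp(\beta_1 x)\bigl(m_0(\xi,\beta_1)x^2 - 2 m_1(\xi,\beta_1)x + m_2(\xi,\beta_1)\bigr) - c = 0
\]
for a suitable constant $c$ (absorbing $d(\xi,\beta_1)s$). Write $q(x) = m_0(\xi,\beta_1) x^2 - 2 m_1(\xi,\beta_1) x + m_2(\xi,\beta_1)$; since $m_0(\xi,\beta_1) = \int \exp(\beta_1 x) f_\xi(x)\,\mathrm{d}x > 0$, this $q$ is a genuine quadratic with positive leading coefficient.

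Next I would differentiate to get
\[
	h'(x) = \exp(\beta_1 x)\bigl(\beta_1 q(x) + q'(x)\bigr) = \exp(\beta_1 x) \, p(x),
\]
where $p(x) = \beta_1 q(x) + q'(x)$ is a polynomial of degree at most two. A short check rules out the degenerate possibility $p \equiv 0$: if $\beta_1 \neq 0$, matching the $x^2$-coefficient in $\beta_1 q + q' = 0$ forces $m_0(\xi,\beta_1) = 0$, contradicting $m_0(\xi,\beta_1) > 0$; if $\beta_1 = 0$, then $p = q'$ is linear and nonzero for the same reason. Hence the zeros of $h'$ coincide with the zeros of a polynomial of degree at most two, so $h'$ has at most two real zeros.

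Finally I would invoke Rolle's theorem: between any two consecutive zeros of $h$ there lies a zero of $h'$, so if $h$ had $k$ distinct real zeros, $h'$ would have at least $k-1$ zeros, forcing $k - 1 \le 2$ and hence $k \le 3$. The main (and only nontrivial) obstacle is ruling out $p \equiv 0$, which is handled by the positivity of $m_0(\xi,\beta_1)$; the rest is a direct application of Rolle's theorem.
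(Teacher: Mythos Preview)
Your proof is correct. Both your argument and the paper's are Rolle-type counts, but the executions differ slightly: the paper first divides through by $\exp(\beta_{1}x)$ to work with $v(x) = q(x) - s\exp(-\beta_{1}x)$, then observes that $v^{(3)}(x) = s\beta_{1}^{3}\exp(-\beta_{1}x)$ never vanishes and iterates the mean value theorem three times; you instead keep $h(x) = \exp(\beta_{1}x)q(x) - c$, differentiate once, and use that $h'(x) = \exp(\beta_{1}x)p(x)$ with $p$ a nonzero polynomial of degree at most two, so $h'$ has at most two zeros. Your route is a touch more economical (one differentiation instead of three) and handles the case $\beta_{1}=0$ uniformly rather than separately; the paper's route avoids having to identify the factorization of $h'$. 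Either way the substance is the same.
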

\begin{proof} 
	For $ \beta_{1} = 0 $, the sensitivity function is a quadratic polynomial in $ x $.
	Hence, there are, at most, two solutions.
	
	For $ \beta_{1} \neq 0 $, the sensitivity function $ \psi(x,\xi,\B) = \exp(\beta_1 x) q(x)$ factorizes into the exponential function ($\exp(\beta_1 x)$) and a quadratic polynomial $ q $ with positive leading term.
	Because $ \psi(x,\xi,\beta_{1}) $ is positive,	only $ s > 0 $ has to be considered.
	Let $ v(x) = q(x) - s \exp(- \beta_{1} x) $.
	The third derivative $ v^{(3)}(x) = s \beta_{1}^{3} \exp(- \beta_{1} x)$ has no roots.
	By iterative application of the mean value theorem, we see that $ v $ has, at most, three roots. 
	Because the solutions of $ \psi(x,\xi,\beta_{1}) = s $ are the roots of $ v $, this completes the proof. 
\end{proof}

Condition~(A) follows from the continuous distribution of the covariate $X_{i}$.

\begin{proof}[Proof of Theorem~\ref{theo:support}]
	If $\xi^*$ is locally $D$-optimal, then, by Theorem~\ref{theorem:opt-design}, its density has the shape $f_{\xi} = f_{X} \1_{\XX}$ and $\XX^{*} = \{x;\, \psi(x,\xi^{*},\beta_{1}) \geq s^{*}\}$ for some $s^{*} > 0$. 
	Because $ \beta_{1} < 0 $, the sensitivity function $ \psi(x, \xi^*,\beta_{1})$ ranges from $ \infty $ for $ x \to - \infty $ to $0 $ for $ x \to \infty $ with $ \psi(x, \xi^*,\beta_{1}) > 0 $ throughout.
	Thus, the number of sign changes in $\psi(x, \xi^{*}, \beta_{1}) - s^{*}$ is odd and, by Lemma~\ref{Lemma:roots}, equal to one or three.
	Hence, $\XX^{*}$ consists of one or two intervals including a left open interval $(- \infty, a_1]$, say, and potentially a second finite interval $ [a_2, a_3] $.
	Conditions (\ref{theo:support}a) and (\ref{theo:support}a'), respectively, follow from the subsampling percentage $\alpha$.
	If there are two intervals, then $\psi(a_k, \xi^{*}, \beta_{1}) = s^{*}$, $k = 1, 2, 3$, by continuity of the sensitivity function and we get condition~(\ref{theo:support}b) in scenario~(i).
	If there is only one interval, then condition~(\ref{theo:support}b') follows from (ii) and (iii) in Theorem~\ref{theorem:opt-design} which completes the proof that the locally $D$-optimal subsampling design satisfies the properties stated in Theorem~\ref{theo:support}.
	
	Conversely, by the shape of the sensitivity function, the properties stated in Theorem~\ref{theo:support} imply the equivalence conditions in Theorem~\ref{theorem:opt-design} which proves the reverse statement.
\end{proof}

\begin{proof}[Proof of  Remark~\ref{rem:left-interval}]
	Assume $ a_1 \leq x_{\min} $. 
	Then 
	\begin{equation*}
		m_1(\xi^{*},\beta_{1}) 
		= \int_{a_2}^{a_3} x \exp(\beta_{1} x) f_{X}(x)\diff x 
		> a_2 \int_{a_2}^{a_3} \exp(\beta_{1} x) f_{X}(x)\diff x 
		= a_2 m_0(\xi^{*},\beta_{1})
	\end{equation*} 
	and $ q $ attains its minimum at $m_1(\xi^{*},\beta_{1}) / m_0(\xi^{*},\beta_{1}) > a_2 $.
	Hence, the sensitivity function $ \psi(x,\xi^{*},\beta_{1}) =\exp(\beta_{1} x) q(x) $ is strictly decreasing on $ (- \infty, a_2] $ such that  $ \psi(a_1,\xi^{*},\beta_{1}) > \psi(a_2,\xi^{*},\beta_{1})$ which leads to a contradiction to the optimality condition~(\ref{theo:support}b).
\end{proof}

\begin{proof}[Proof of Theorem~\ref{theorem:equivariant}]
	The proof goes along the same lines as in \cite{radloff2016invariance}.
	Denote by $g$ the location-scale transformation $g(x) =a x + b$.	
	Let $Z_{i} = g(X_{i})$.
	Note that only the distribution of the covariate plays a role, but not the covariate itself.
	The transformation $ g $ is conformable with the regression function $ \f(x) $, i.\,e.~there exists a nonsingular matrix 
	$ \mathbf{Q} =  \begin{pmatrix}
		1 & 0 \\ b & a
	\end{pmatrix} $
	such that $ \f(a x + b) = \mathbf{Q} \f(x) $ for all $x$. 
	For a design $ \xi $ bounded by $ f_{X} $, 	we define the transformed design $ \zeta = \xi^{g }$ which has density $  f_{\zeta}(z) = \frac{1}{|a|} f_{\xi}(\frac{z - b}{a}) $ and is bounded by the density $  f_{Z}(z) = \frac{1}{|a|} f_{X}(\frac{z - b}{a}) $ of $Z_{i}$.
	Further, let $\tilde{\B} = (\mathbf{Q}^{\top})^{-1} \B = (\beta_{0} - \beta_{1}b/a, \beta_{1}/a)^{\top}$.
	By the transformation theorem for measure integrals,
	\begin{align*}
		\M(\zeta,\tilde{\B}) 
		&= \int \exp(\beta_{0} + \beta_{1} (z - b) / a) \f(z) \f(z)^{\top} \zeta(\diff z) \\
		&= \int \exp(\beta_{0} + \beta_{1} x) \mathbf{Q} \f(x) \f(x)^{\top} \mathbf{Q}^{\top} \xi(\diff x) \\
		&= \mathbf{Q} \M(\xi,\B) \mathbf{Q}^{\top}.
	\end{align*}
	Therefore 
	$ \det(\M(\zeta,\tilde{\B})) = \det(\mathbf{Q})^{2} \det(\M(\xi,\B))$. 
	Thus $ \xi^{*} $ maximizes the $ D $-criterion over the set of 
	subsampling designs bounded by $ f_{X} $ for $ \beta_{1} $ if and only if $ \zeta^{*} $ 
	maximizes the $ D $-criterion over the set of subsampling designs bounded 
	by $ f_{Z} $ for $ \beta_{1}/a $.
\end{proof}

\bibliographystyle{plainnat}  
\bibliography{refPoisson}

\begin{thebibliography}{15}
\providecommand{\natexlab}[1]{#1}
\providecommand{\url}[1]{\texttt{#1}}
\expandafter\ifx\csname urlstyle\endcsname\relax
  \providecommand{\doi}[1]{doi: #1}\else
  \providecommand{\doi}{doi: \begingroup \urlstyle{rm}\Url}\fi

\bibitem[Ai et~al.(2021)Ai, Yu, Zhang, and Wang]{ai2021optimal}
Mingyao Ai, Jun Yu, Huiming Zhang, and HaiYing Wang.
\newblock Optimal subsampling algorithms for big data regressions.
\newblock \emph{Statistica Sinica}, 31:\penalty0 749--772, 2021.

\bibitem[Cheng et~al.(2020)Cheng, Wang, and Yang]{cheng2020information}
Qianshun Cheng, HaiYing Wang, and Min Yang.
\newblock Information-based optimal subdata selection for big data logistic
  regression.
\newblock \emph{Journal of Statistical Planning and Inference}, 209:\penalty0
  112--122, 2020.
\newblock \doi{https://doi.org/10.1016/j.jspi.2020.03.004}.

\bibitem[Fedorov(1989)]{fedorov1989optimal}
Valerii~V. Fedorov.
\newblock Optimal design with bounded density: optimization algorithms of the
  exchange type.
\newblock \emph{Journal of Statistical Planning and Inference}, 22:\penalty0
  1--13, 1989.
\newblock \doi{https://doi.org/10.1016/0378-3758(89)90060-8}.

\bibitem[Hasselman(2018)]{nleqslv}
Berend Hasselman.
\newblock \emph{nleqslv: Solve Systems of Nonlinear Equations}, 2018.
\newblock URL \url{https://CRAN.R-project.org/package=nleqslv}.
\newblock R package version 3.3.2.

\bibitem[Pukelsheim(1993)]{pukelsheim1993optimal}
Friedrich Pukelsheim.
\newblock \emph{Optimal Design of Experiments}.
\newblock Wiley, New York, 1993.

\bibitem[Radloff and Schwabe(2016)]{radloff2016invariance}
Martin Radloff and Rainer Schwabe.
\newblock Invariance and equivariance in experimental design for nonlinear
  models.
\newblock In J.~Kunert, C.H. Müller, and A.C. Atkinson, editors, \emph{mODa
  11-Advances in Model-Oriented Design and Analysis}, pages 217--224. Springer,
  2016.

\bibitem[Reuter and Schwabe(2023)]{reuter2023optimal}
Torsten Reuter and Rainer Schwabe.
\newblock Optimal subsampling design for polynomial regression in one
  covariate.
\newblock \emph{Statistical Papers}, 64:\penalty0 1095--1117, 2023.
\newblock \doi{https://doi.org/10.1007/s00362-023-01425-0}.

\bibitem[Rodr{\'\i}guez-Torreblanca and
  Rodr{\'\i}guez-D{\'\i}az(2007)]{rodriguez2007locally}
C.~Rodr{\'\i}guez-Torreblanca and J.M. Rodr{\'\i}guez-D{\'\i}az.
\newblock Locally {D}-and c-optimal designs for {P}oisson and negative binomial
  regression models.
\newblock \emph{Metrika}, 66:\penalty0 161--172, 2007.

\bibitem[Russell et~al.(2009)Russell, Woods, Lewis, and
  Eccleston]{russell2009d}
Kenneth~G. Russell, David~C. Woods, Susan~M. Lewis, and John~A. Eccleston.
\newblock D-optimal designs for {P}oisson regression models.
\newblock \emph{Statistica Sinica}, 19:\penalty0 721--730, 2009.

\bibitem[Sahm and Schwabe(2001)]{sahm2001note}
Michael Sahm and Rainer Schwabe.
\newblock A note on optimal bounded designs.
\newblock In A.~Atkinson, B.~Bogacka, and A.~Zhigljavsky, editors,
  \emph{Optimum Design 2000}, pages 131--140. Kluwer, Dordrecht, 2001.

\bibitem[Ul~Hassan and Miller(2019)]{ul2019optimal}
Mahmood Ul~Hassan and Frank Miller.
\newblock Optimal item calibration for computerized achievement tests.
\newblock \emph{Psychometrika}, 84:\penalty0 1101--1128, 2019.

\bibitem[Wang et~al.(2018)Wang, Zhu, and Ma]{wang2018optimal}
HaiYing Wang, Rong Zhu, and Ping Ma.
\newblock Optimal subsampling for large sample logistic regression.
\newblock \emph{Journal of the American Statistical Association}, 113:\penalty0
  829--844, 2018.

\bibitem[Wang et~al.(2019)Wang, Yang, and Stufken]{wang2019information}
HaiYing Wang, Min Yang, and John Stufken.
\newblock Information-based optimal subdata selection for big data linear
  regression.
\newblock \emph{Journal of the American Statistical Association}, 114:\penalty0
  393--405, 2019.

\bibitem[Wynn(1977)]{wynn1977optimum}
Henry~P. Wynn.
\newblock Optimum designs for finite populations sampling.
\newblock In S.S. Gupta and D.S. Moore, editors, \emph{Statistical Decision
  Theory and Related Topics II}, pages 471--478. Academic Press, New York,
  1977.

\bibitem[Yu et~al.(2022)Yu, Wang, Ai, and Zhang]{yu2022optimal}
Jun Yu, HaiYing Wang, Mingyao Ai, and Huiming Zhang.
\newblock Optimal distributed subsampling for maximum quasi-likelihood
  estimators with massive data.
\newblock \emph{Journal of the American Statistical Association}, 117:\penalty0
  265--276, 2022.
\newblock \doi{https://doi.org/10.1080/01621459.2020.1773832}.

\end{thebibliography}

\end{document}